\documentclass[12pt]{amsart}
\usepackage{graphicx}
\usepackage[headings]{fullpage}
\usepackage{amssymb,epic,eepic,epsfig,amsbsy,amsmath,amscd,color}
\numberwithin{equation}{section}
                        \textwidth16cm
                        \textheight22cm
                        \topmargin-0.5cm
                        \oddsidemargin 0.2cm
                        \evensidemargin 0.2cm
                        \theoremstyle{plain}
\usepackage{mathrsfs}

\newcommand\no[1]{}

\newtheorem{theorem}{Theorem}[section]
\newtheorem{thm}{Theorem}
\newtheorem{lemma}[theorem]{Lemma}
\newtheorem{corollary}[theorem]{Corollary}
\newtheorem{proposition}[theorem]{Proposition}

\theoremstyle{definition}

\newcommand{\G}{G(K)}
\newcommand{\D}{\Delta}

\def\BA{\mathbb A}
\def\BC{\mathbb C}
\def\C{\mathbb C}
\def\BH{\mathbb H}
\def\BN{\mathbb N}
\def\BZ{\mathbb Z}

\def\CK{\mathcal K}

\def\CT{\mathcal T}

\def\la{\langle}
\def\ra{\rangle}

\DeclareMathOperator{\tr}{\mathrm tr}

\def\be { \begin{equation} }
\def\ee { \end{equation} }

\begin{document}
\allowdisplaybreaks
\baselineskip16pt
\title[Fibering of DTK via the adjoint hyperbolic torsion polynomial]{Fibering of double twist knots via the adjoint hyperbolic torsion polynomial}

\author[Anh T. Tran]{Anh T. Tran}

\thanks{2020 \textit{Mathematics Subject Classification}.\/ 
Primary 57K31; Secondary 57K14, 57M05}

\thanks{{\it Key words and phrases.\/
Adjoint action, double twist knot, fibered knot, genus, hyperbolic knot, representation, twisted Alexander polynomial}}

\address{Department of Mathematical Sciences,
The University of Texas at Dallas,
Richardson, TX 75080-3021, USA}
\email{att140830@utdallas.edu}

\begin{abstract}
For a hyperbolic knot $K$ in $S^3$, the adjoint hyperbolic torsion polynomial $\mathcal T^{\mathrm{Ad}}_K(t) \in \mathbb C[t^{\pm 1}]$ is defined as a normalization of the twisted Alexander polynomial of $K$ associated with the $\mathrm{SL}_3(\mathbb C)$-representation obtained by composing the holonomy representation of $K$ with the adjoint action of $\mathrm{SL}_2(\mathbb C)$ on its Lie algebra $\mathfrak{sl}_2(\mathbb C)$. In this paper we consider the adjoint hyperbolic torsion polynomial for a two-parameter family of rational knots called double twist knots, and show that $\mathcal T^{\mathrm{Ad}}_K(t)$ determines the genus and fibering of this family by using algebraic integers.
\end{abstract}

\maketitle

\section{Introduction}

The twisted Alexander polynomial was first defined by Lin \cite{Li} for knots in $S^3$ and then by Wada \cite{Wa} for finitely presentable groups. It  is a generalization of the classical Alexander polynomial by using linear representations, and has been very useful in low dimensional topology and knot theory.  Let $K \subset S^3$ be a knot and $G(K)$ its knot group, which is the fundamental group of the knot exterior $S^3 \setminus K$. Following \cite{Wa}, for a $d$-dimensional linear representation $\rho: G(K) \to \mathrm{SL}_d(\BC)$ we can define a rational function $\Delta^{\rho}_{K}(t) \in \BC(t)$ up to multiplication by $\pm t^i \, (i \in \BZ)$. We call $\Delta^{\rho}_{K}(t)$ the twisted Alexander polynomial of $K$ associated with $\rho$. Like the classical Alexander polynomial $\Delta_K(t)$, the twisted Alexander polynomial $\Delta^{\rho}_{K}(t)$ contains information about the genus and fibering of $K$. For example,  the degree of $\Delta^{\rho}_{K}(t)$ is bounded above by $d(2g(K)-1)$, where $g(K)$ is  the least genus of all Seifert surfaces bounding $K$ \cite{FK}. In this paper we say that $\Delta^{\rho}_{K}(t)$ determines the knot genus $g(K)$ if its degree is exactly equal to $d(2g(K)-1)$. Regarding fibering, it is known that if $K \subset S^3$ is a fibered knot then $\Delta^{\rho}_{K}(t)$ is expressed as a rational function of monic polynomials in $t$ \cite{GKM}. Here a Laurent polynomial in $t$ is said to be monic if its leading coefficient equals $\pm 1$. 

We now focus on  two-dimensional representations $\rho: G(K) \to \mathrm{SL}_2(\BC)$. Then the  twisted Alexander polynomial  $\Delta^{\rho}_{K}(t)$ becomes a Laurent polynomial in $t$ if $\rho$ is a nonabelian $\mathrm{SL}_2(\C)$-representation \cite{KM}. When $K \subset S^3$ is a hyperbolic knot (namely, the knot exterior admits a complete hyperbolic metric of finite volume), up to conjugation there is a unique discrete and faithful representation $\bar{\rho}_{0}: G(K) \to \mathrm{Isom}^+(\BH^3) \cong \mathrm{PSL}_2(\BC)$. This is called the holonomy representation corresponding to the hyperbolic structure.  It lifts to a representation $\rho_{0}: G(K) \to \mathrm{SL}_2(\BC)$ which is also discrete and faithful \cite{Th}. By abuse of terminology, we also call $\rho_{0}$ the holonomy representation of $K$.  In \cite{DFJ},  Dunfield, Friedl and Jackson studied the twisted Alexander polynomial $\D_{K}^{\rho_{0}}(t) \in \BC[t^{\pm 1}]$ for a hyperbolic knot $K$. It is normalized as a symmetric Laurent polynomial in $t$, which is denoted by $\CT_K(t)$ and called the \textit{hyperbolic torsion polynomial} of $K$. 
Based on extensive experiments with all hyperbolic
knots having at most $15$ crossings, they conjectured that $\CT_K(t) \in \BC[t^{\pm 1}]$ determines the knot genus $g(K)$, in the sense that $\deg \CT_K(t)= 2(2g(K)-1)$. Moreover, $K$ is fibered if and only if $\CT_K(t)$ is monic.  This conjecture has been verified for some infinite families of hyperbolic knots \cite{Mo-twist, MT-dtk, AD, Po, Mo-pretzel, MT-pretzel}. 

There is another way to get a torsion polynomial by considering the twisted Alexander polynomial  associated with the $\mathrm{SL}_3(\BC)$-representation obtained by composing the holonomy representation $\rho_{0}: G(K) \to \mathrm{SL}_2(\BC)$ with the adjoint action, denoted by $\mathrm{Ad}$, of $\mathrm{SL}_2(\BC)$ on its Lie algebra $\mathfrak{sl}_2(\BC)$. In \cite{DY},  Dubois and Yamaguchi proved that the twisted Alexander polynomial $\D_{K}^{\mathrm{Ad} \circ \rho_{0}}(t)$ is actually a Laurent polynomial in $t$. Moreover, it can be normalized as a Laurent  polynomial $\CT^{\mathrm{Ad}}_K(t)$ such that $\CT^{\mathrm{Ad}}_K(t^{-1}) = - \CT^{\mathrm{Ad}}_K(t)$ up to multiplication by $t^i \, (i \in \BZ)$. We call $\CT^{\mathrm{Ad}}_K(t)$ the \textit{adjoint hyperbolic torsion polynomial} of $K$. Like the hyperbolic torsion polynomial $\CT_K(t)$, we can ask whether $\CT^{\mathrm{Ad}}_K(t)$ determines the genus and fibering of $K$ or not. Note that $\deg \CT^{\mathrm{Ad}}_K(t) \le 3(2g(K)-1)$. In \cite{DFJ}, Dunfield, Friedl and Jackson pointed out that $\CT^{\mathrm{Ad}}_K(t)$ does not always determine the knot genus $g(K)$, in the sense that there exist  hyperbolic knots $\CK \subset S^3$ such that $\deg \CT_{\CK}(t) < 3(2g(\CK)-1)$. However, for all hyperbolic knots with at most $15$ crossings, they numerically checked that $K$ is fibered if and only if $\CT^{\mathrm{Ad}}_K(t)$ is monic. 

In this paper we consider the adjoint hyperbolic torsion polynomial $\CT^{\mathrm{Ad}}_K(t)$ for a two-parameter family of rational knots called double twist knots. Let $J(k,l)$ denote the double twist knot/link indicated in Figure \ref{pic}, where the integers $k$ and $l$ determine the signed number of half twists in the boxes. In the $k$-box (resp. $l$-box), negative (resp. positive) numbers correspond to right-handed half twists and positive (resp. negative) numbers correspond to left-handed half twists. Note that $J(k,l)$ is the same as the rational knot/link corresponding to the continued fraction $-k+\frac{1}{l}$. It is a knot when $kl$ is even (and a two-component link if $kl$ is odd) and is the trivial knot if $kl = 0$. Moreover, $J(k,l)$ is
ambient isotopic to $J(l,k)$ and $J(-k,-l)$ is the mirror image of $J(k, l)$. 
Therefore, up to mirror image, we will only consider double twist knots $J(k, 2n)$ for $k>0$ and $n \not=0$. The knot $J(k, 2n)$ is 
hyperbolic unless it is $J(2, -2)$ (the trefoil knot) or $J(1, 2n)$.  
Note that the knots $J(2m, 2n)$ are also known as genus one two-bridge knots and $J(2,2n)$ as twist knots. We will say that $J(k, 2n)$ is an even (resp. odd) double twist knot if $k$ is even (resp. odd). 

\begin{figure}[h] 
		\centering
		\includegraphics[scale=0.45]{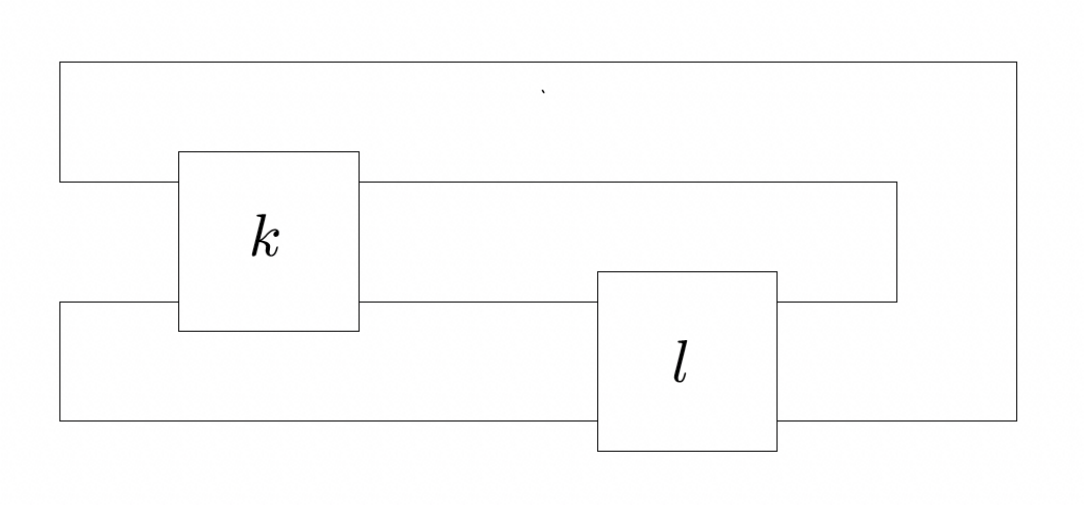}
		\caption{The double twist knot/link $J(k,l)$.}
		\label{fig:genKnotDiagram}
\label{pic}
\end{figure}

In \cite{Tr-atap}, an explicit formula for the adjoint twisted Alexander polynomial $\D_{K}^{\mathrm{Ad} \circ \rho}(t)$ of even double twist knots $J(2m, 2n)$ was provided. The fact that these knots have genus one makes it possible to perform the calculations. The formula was then applied in \cite{Mo-atap, Tr-atap2025} to show that the adjoint hyperbolic torsion polynomial $\CT^{\mathrm{Ad}}_K(t)$ determines the genus and fibering of $J(2m, 2n)$. The case of odd double twist knots $J(2m+1, 2n)$ is more challenging, since it is difficult to compute the adjoint twisted Alexander polynomial explicitly.  However, by using properties of algebraic integers, we will prove that the adjoint hyperbolic torsion polynomial  determines the genus and fibering of all double twist knots without knowing its explicit formula. 

\begin{thm} \label{main}
Let $K=J(k,2n)$ be a hyperbolic double twist knot. Then the adjoint hyperbolic torsion polynomial $\CT^{\mathrm{Ad}}_K(t) \in \BC[t^{\pm 1}]$ determines the genus of $K$, in the sense that $\deg \CT^{\mathrm{Ad}}_K(t) = 3(2g(K)-1)$. Moreover, $K$ is fibered if and only if $\CT^{\mathrm{Ad}}_K(t)$ is monic. 
\end{thm}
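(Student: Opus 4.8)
The plan is to compute, or at least control, the degree and leading coefficient of $\CT^{\mathrm{Ad}}_K(t)$ for $K=J(k,2n)$ by working with Wada's invariant from a two-generator presentation of $G(K)$, and then to read off genus and fibering from known facts about these knots. First I would recall the standard two-bridge presentation $G(K)=\la a,b \mid wa=bw\ra$ for $K=J(k,2n)$, and the associated $\mathrm{SL}_2(\BC)$ character variety: nonabelian representations are cut out by the Riley polynomial $\phi_K(s,u)$, and the holonomy representation $\rho_0$ corresponds to a distinguished root over the trace field. Composing with $\mathrm{Ad}$ gives an $\mathrm{SL}_3(\BC)$-representation, and Wada's formula expresses $\D^{\mathrm{Ad}\circ\rho_0}_K(t)$ as a quotient of two $3\times 3$ block determinants in the Fox derivatives of the relator; after the symmetric normalization of Dubois--Yamaguchi this becomes $\CT^{\mathrm{Ad}}_K(t)$. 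The key point is that the coefficients of $\D^{\mathrm{Ad}\circ\rho_0}_K(t)$ are \emph{polynomial} expressions in the entries of $\rho_0$, hence lie in the trace field of $K$, which is an algebraic number field; in fact one can arrange that these coefficients are algebraic \emph{integers} (entries of $\rho_0$ can be taken integral, or one clears denominators using the Riley polynomial).

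The genus statement I would handle as follows. Since $J(k,2n)$ is a two-bridge knot, its genus is classical: $g(J(2m,2n))=1$, and for odd $k=2m+1$ the genus is $g(J(2m+1,2n))=|n|$ when $n>0$ and $|n|$-type formula when $n<0$ (read off from the continued fraction, equivalently from the degree of the Alexander polynomial, which for two-bridge knots determines the genus since they are alternating hence fibered-or-not with $\deg\Delta_K=2g(K)$). So the target degree $3(2g(K)-1)$ is known. For the upper bound we already have $\deg\CT^{\mathrm{Ad}}_K(t)\le 3(2g(K)-1)$ from the excerpt. For the lower bound, the strategy is to show the coefficient of $t^{\pm 3(2g(K)-1)}$ in $\D^{\mathrm{Ad}\circ\rho_0}_K(t)$ is nonzero. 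Here is where algebraic integers enter: this leading coefficient is an algebraic integer in the trace field, and I would compute it \emph{symbolically as a polynomial in the Riley variables}, show it is a nonzero polynomial (a finite check, using the explicit Fox calculus for the short relator word $w$ of $J(k,2n)$), and then argue that it cannot vanish at the holonomy root — either because its norm down to $\BQ$ is a nonzero integer, or because any common root with the Riley polynomial would have to be a root of a specific small auxiliary polynomial which is incompatible with discreteness/faithfulness (e.g.\ it would force a parabolic or reducible representation). This sidesteps needing the explicit formula of \cite{Tr-atap} in the odd case.

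For fibering: among the knots $J(k,2n)$, the fibered ones are exactly the even twist knots $J(2,2n)$ together with a short explicit list (fibered two-bridge knots have monic Alexander polynomial; for double twist knots one knows precisely which $J(k,2n)$ satisfy this). So I must show $\CT^{\mathrm{Ad}}_K(t)$ is monic exactly on that list. The ``only if'' direction is immediate from \cite{GKM} (non-fibered $\Rightarrow$ $\CT^{\mathrm{Ad}}_K(t)$ non-monic — actually this needs the adjoint analogue, so I would instead argue directly that for non-fibered $J(k,2n)$ the leading coefficient, computed symbolically, is a non-unit algebraic integer: its norm to $\BQ$ has absolute value $>1$, forcing non-monicity). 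The ``if'' direction: for the fibered cases, either invoke the computation already done in \cite{Mo-atap, Tr-atap2025} for $J(2m,2n)$, or show the symbolic leading coefficient is a unit (constant $\pm1$, or a cyclotomic-type unit) in the relevant ring.

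I expect the main obstacle to be the lower-bound/non-vanishing step for odd double twist knots: proving that the symbolically-computed leading coefficient of the adjoint torsion does not vanish at the holonomy representation, without an explicit closed formula. The leverage is entirely number-theoretic — one shows the leading coefficient, as an element of the trace field, has nonzero (indeed controlled) norm over $\BQ$, using that it is an algebraic integer and that the Riley polynomial of $J(k,2n)$ is irreducible (or at least that the holonomy factor is) so that the holonomy root is a full Galois conjugate class; a vanishing leading coefficient would then force the Riley polynomial to divide the leading-coefficient polynomial, a divisibility one can rule out by a degree or specialization argument. Making this divisibility obstruction precise and uniform in $(k,n)$ is the technical heart of the argument.
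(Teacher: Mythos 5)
Your overall strategy --- extract the extreme coefficients of $\D^{\mathrm{Ad}\circ\rho_0}_K(t)$ by Fox calculus from the two-bridge presentation, observe that they are integer polynomials in the algebraic integer $y$, and use integrality to rule out monicity in the non-fibered cases --- is the same as the paper's, and your norm computation ($|N_{\BQ(y)/\BQ}(qP(y))|=|q|^{[\BQ(y):\BQ]}|N(P(y))|\ge 2$ once $P(y)\ne 0$, hence $qP(y)\ne\pm1$) is a legitimate variant of the paper's Eisenstein-type Corollary \ref{algint}. However, the proposal leaves as acknowledged gaps exactly the two steps that carry the content. The first is the explicit leading coefficient: the whole argument needs it to be $q\cdot P(y)$ with a visible integer $q\ge 2$ in every non-fibered case and equal to $1$ in every fibered case. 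The paper obtains this in Propositions \ref{pos} and \ref{neg} --- the coefficient is $m|n|\,S^2_{m-1}(y)S^2_{|n|-1}(z)$ for $k=2m$, $m\,S^2_{m}(y)$ or $(m+1)S^2_m(y)$ for $k=2m+1\ge 3$, and $1$ for $k=1$ --- via Chebyshev expansions of $\rho(w^l)$ and the identity $\det\sigma_l(\mathrm{Ad}(V))=(l+1)S_l^2(\tr V)$ (Proposition \ref{ad}); ``explicit Fox calculus for the short relator'' is where most of the work lives, and without it neither the factor $q$ nor monicity in the fibered cases is visible. (A related factual slip: $J(2,2n)$ is fibered only for $n=\pm1$; the fibered double twist knots are $J(1,2n)$, $J(2,\pm2)$, and $J(3,2n)$ with $n\ge1$.)

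The second, and more serious, gap is the non-vanishing of $P(y)$ at the holonomy representation, which you propose to handle by a nonzero norm or by irreducibility of the Riley polynomial plus a divisibility obstruction. This route is not sound as stated: the Riley polynomial of a two-bridge knot need not be irreducible, the holonomy root is not known a priori to sweep out a full Galois orbit over which a norm could be controlled, and discreteness/faithfulness is hard to convert into a polynomial non-divisibility statement uniform in $(k,n)$. The paper avoids all of this with an elementary argument (Proposition \ref{genus}): if $S_{m-1}(y)=0$ or $S_m(y)=0$, the identity $S_m^2-yS_mS_{m-1}+S_{m-1}^2=1$ pins down $z$ and $t$, and then $R_\CK=tS_{n-1}(z)-S_{n-2}(z)$ evaluates to $\pm1$ (even case) or to $S_n(z)-S_{n-1}(z)$ with $z>2$ (odd case, using only the parabolicity $x^2=4$ of $\rho_0$ and the fact that all roots of $S_l-S_{l-1}$ lie in $(-2,2)$), hence is nonzero, contradicting $R_\CK(\rho_0)=0$. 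You would need to find and carry out an argument of this kind; the number-theoretic leverage you describe does not by itself close this gap.
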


We remark that a similar, but incomplete, result for the hyperbolic torsion polynomial $\CT_K(t)$ of double twist knots was proved in \cite{MT-dtk}. More precisely, Morifuji and the author verified that $\CT_K(t)$ determines the genus of all double twist knots, and moreover  it determines the fibering of $J(2m+1, 2n)$. However, it is still unknown whether $\CT_K(t)$ determines the fibering of $J(2m, 2n)$ or not.  

The paper is organized as follows. In Section \ref{prelim} we review the twisted Alexander polynomial, hyperbolic torsion conjecture, and algebraic integers. In Section \ref{proof:main} we first compute the Riley polynomial of double twist knots, then determine the highest and lowest degree terms of the adjoint twisted Alexander polynomial associated with the holonomy representation, and finally give a proof of Theorem \ref{main}.

\section{Preliminaries} \label{prelim}

\subsection{Twisted Alexander polynomial}
Let $K \subset S^3$ be a knot and $E_K= S^3 \setminus K$ the knot exterior. Let $G(K)=\pi_1(E_K)$ be the knot group, which is the fundamental group of $E_K$. 
We choose  a Wirtinger presentation:
$$G(K) = \la x_1,\ldots,x_l\,|\,r_1,\ldots,r_{l-1}\ra.$$ The abelianization homomorphism 
$$
\mathfrak{a}:G(K) \to H_1(E_K;\BZ)\cong\BZ
=
\la t \ra
$$
is given by assigning to each generator $x_i$ the meridian element 
$t \in H_1(E_K;\BZ)$. Here we denote the sum in $\BZ$ multiplicatively. 

Let $\rho:\G\to \mathrm{SL}_d(\C)$ be a $d$-dimensional linear representation. The maps $\rho$ and $\mathfrak{a}$ naturally induce two ring
homomorphisms $\tilde{\rho}: {\BZ}[\G] \rightarrow M_d(\C)$ and
$\tilde{\mathfrak{a}}:{\BZ}[\G]\rightarrow {\BZ}[t^{\pm1}]$, where
${\BZ}[\G]$ is the group ring of $\G$ and $M_d(\C)$ is the
matrix algebra of degree $d$ over ${\C}$. The tensor product 
$\tilde{\rho}\otimes\tilde{\mathfrak{a}}$ defines a ring homomorphism
${\BZ}[\G] \to M_d \left({\C}[t^{\pm1}]\right)$. 
Let $F_l$ denote the
free group on the generators $x_1,\ldots,x_l$ and
$\Phi:{\BZ}[F_l]\to M_d\left({\C}[t^{\pm1}]\right)$ 
the composition of the surjection
$\tilde{\phi}:{\BZ}[F_l]\to{\BZ}[\G]$
induced by the presentation of $\G$
and the map
$\tilde{\rho}\otimes\tilde{\mathfrak{a}}:{\BZ}[\G]\to M_d\left({\C}[t^{\pm1}]\right)$.

Let $Q$ denote the $(l-1)\times l$ matrix 
whose $(i,j)$-entry is the $d\times d$ matrix
$\Phi\left(\frac{\partial r_i}{\partial x_j}\right)
\in M_d\left({\C}[t^{\pm1}]\right)$, 
where
$\frac{\partial}{\partial x}$
denotes the Fox derivative. 
For $1\leq j\leq l$,
we denote by $Q_j$
the $(l-1)\times(l-1)$ matrix obtained from $Q$
by removing the $j$-th column.
We regard $Q_j$ as
a $d(l-1)\times d(l-1)$ matrix with coefficients in
${\C}[t^{\pm1}]$. 
Then Wada's twisted Alexander polynomial \cite{Wa} of $K$
associated with a representation $\rho: \G \to \mathrm{SL}_d(\C)$ is defined
to be the rational function
$$
\D_{K}^{\rho}(t)
=\frac{\det Q_j}{\det\Phi(1-x_j)}.
$$
It is well-defined up to multiplication by $\pm t^{i}~(i\in{\BZ})$. 

Now we focus on two-dimensional linear representations of knot groups. It is known that if $\rho:\G\to \mathrm{SL}_2(\C)$ is a nonabelian representation, the rational function $\D_{K}^{\rho}(t)$ becomes a Laurent polynomial in $t$ \cite[Theorem 3.1]{KM}. 

Let $\mathrm{sl}_2(\BC)$ be the Lie algebra of $2 \times 2$ complex matrices with trace zero. The adjoint action, denoted by $\mathrm{Ad}$, is the conjugation on $\mathrm{sl}_2(\BC)$ by $\mathrm{SL}_2(\BC)$, i.e. $\mathrm{Ad}(V)(g)=VgV^{-1} \in \mathrm{sl}_2(\BC)$ for all $V \in \mathrm{SL}_2(\BC)$ and $g \in \mathrm{sl}_2(\BC)$. For each representation $\rho: G(K) \to \mathrm{SL}_2(\C)$, the composition $\mathrm{Ad} \circ \rho$ is  a representation of $G(K)$ into $\mathrm{SL}_3(\BC)$ and thus we can define a rational function $\Delta_K^{\mathrm{Ad} \circ \rho} (t)$ called the adjoint twisted Alexander polynomial of $K$.

\subsection{Hyperbolic torsion conjecture} 

For a hyperbolic knot $K$ in $S^3$, recall that $\rho_{0}: G(K) \to \mathrm{SL}_2(\BC)$ is a lift of  the holonomy representation $\bar{\rho}_{0}: G(K) \to \mathrm{PSL}_2(\BC)$ corresponding to the hyperbolic structure of the knot exterior. The representation $\rho_{0}$ is known to be discrete and faithful \cite{Th}.
In \cite{DFJ},  Dunfield, Friedl and Jackson studied the hyperbolic torsion polynomial $\CT_K(t)$ which is a normalization of the twisted Alexander polynomial $\D_{K}^{\rho_{0}}(t) \in \BC[t^{\pm 1}]$ such that $\CT_K(t) = \CT_K(t^{-1})$. 
Based on many numerical computations, they conjectured that $\CT_K(t) \in \BC[t^{\pm 1}]$ determines the knot genus $g(K)$, namely, $\deg \CT_K(t)= 2(2g(K)-1)$. Moreover, $K$ is fibered if and only if $\CT_K(t)$ is monic.  This conjecture has been verified for some families of hyperbolic knots \cite{Mo-twist, MT-dtk, AD, Po, Mo-pretzel, MT-pretzel}.

On the other hand, we can also study the adjoint twisted Alexander polynomial associated with the holonomy representation. In \cite{DY}, Dubois and Yamaguchi proved that the twisted Alexander polynomial $\D_{K}^{\mathrm{Ad} \circ \rho_{0}}(t)$ is actually a Laurent polynomial in $t$. The adjoint hyperbolic torsion polynomial $\CT^{\mathrm{Ad}}_K(t) \in \BC[t^{\pm 1}]$  is a normalization of $\D_{K}^{\mathrm{Ad} \circ \rho_{0}}(t)$ such that $\CT^{\mathrm{Ad}}_K(t^{-1}) = - \CT^{\mathrm{Ad}}_K(t)$. In \cite{DFJ}, Dunfield, Friedl and Jackson remarked that $\CT^{\mathrm{Ad}}_K(t)$ does not always determine the knot genus. However, for all hyperbolic knots having at most $15$ crossings, they numerically checked that $K$ is fibered if and only if $\CT^{\mathrm{Ad}}_K(t)$ is monic. In \cite{Mo-atap, Tr-atap2025}, by using an explicit formula for the adjoint twisted Alexander polynomial $\D_{K}^{\mathrm{Ad} \circ \rho}(t)$ from \cite{Tr-atap}, Morifuji and the author proved that  the adjoint hyperbolic torsion polynomial determines the genus and fibering of genus one two-bridge knots.

\subsection{Algebraic integers}

An \textit{algebraic integer} is a complex number which is a root of some monic polynomial whose coefficients are integers. It is easily checked that the set of all algebraic integers, denoted by $\BA$, is closed under addition and multiplication. Hence $\BA$ is a commutative subring of $\BC$. We will show the following. 

\begin{proposition} \label{alg}
Suppose $P(t) = c_q t^q + \cdots + c_ 1 t + c_0 \in \BZ[t]$ is a non-constant polynomial, and there exists a prime number $p$ such that $p \nmid c_0$ and $p \mid c_1, \cdots, c_q$. Then none of the roots of $P[t]$ are algebraic integers.
\end{proposition}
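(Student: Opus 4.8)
The plan is to argue by contradiction via reduction modulo $p$. Suppose, for contradiction, that some root $\alpha$ of $P(t)$ is an algebraic integer. Since $\alpha \in \BA$, its minimal polynomial $m(t)$ over $\BQ$ is monic with coefficients in $\BZ$, and $\deg m \ge 1$. (Here I would invoke the standard fact that the minimal polynomial of an algebraic integer lies in $\BZ[t]$, which follows from Gauss's lemma.)

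As $m(\alpha)=0=P(\alpha)$ and $m$ is the minimal polynomial of $\alpha$, one has $m(t) \mid P(t)$ in $\BQ[t]$; write $P(t)=m(t)g(t)$ with $g(t)\in\BQ[t]$. Because $m(t)$ is monic with integer coefficients and $P(t)\in\BZ[t]$, performing polynomial long division of $P$ by $m$ (equivalently, Gauss's lemma again) shows that in fact $g(t)\in\BZ[t]$.

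Next I would reduce the identity $P=mg$ modulo $p$, obtaining a factorization in $\F_p[t]$. By hypothesis $p\mid c_1,\dots,c_q$ while $p\nmid c_0$, so the reduction $\bar P(t)\in\F_p[t]$ is the nonzero constant $\bar c_0\in\F_p^{\times}$. Hence $\bar m(t)\,\bar g(t)=\bar c_0$ in the integral domain $\F_p[t]$, which forces both $\bar m(t)$ and $\bar g(t)$ to be nonzero constants. But $m(t)$ is monic of degree $\deg m\ge 1$, so its reduction $\bar m(t)$ is again monic of degree $\deg m\ge 1$, a contradiction. Therefore $P(t)$ has no root in $\BA$.

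I expect no genuine obstacle here: the only mildly technical points are the two standard integrality facts used above (the minimal polynomial of an algebraic integer is a monic integer polynomial, and division of an integer polynomial by a monic integer polynomial stays in $\BZ[t]$). The substance of the argument is simply the elementary observation that the hypothesis makes $P$ collapse to a nonzero constant mod $p$, and a nonzero constant in $\F_p[t]$ cannot have a factor that is monic of positive degree.
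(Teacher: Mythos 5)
Your proof is correct, and it takes a genuinely different route from the paper's. The paper imitates the classical proof of Eisenstein's criterion: it takes the minimal (primitive) integer polynomial $R(t)=a_rt^r+\cdots+a_0$ of a root, writes $P=RS$, picks the largest indices $i,j$ with $p\nmid a_i$ and $p\nmid b_j$, and analyzes the coefficient $c_{i+j}$ to conclude that $i=j=0$, hence $p\mid a_r$, so $R$ is not monic and the root is not an algebraic integer. You instead assume a root is an algebraic integer, use the standard facts that its minimal polynomial $m$ is monic in $\BZ[t]$ and that dividing $P$ by a monic integer polynomial keeps the quotient in $\BZ[t]$, and then reduce mod $p$: the hypothesis collapses $\bar P$ to a nonzero constant in $\F_p[t]$, which cannot have a monic factor of positive degree. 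The two arguments encode the same divisibility information, but yours replaces the index-chasing by a one-line degree count in $\F_p[t]$, at the cost of invoking Gauss's lemma twice; the paper's version is more self-contained (it only uses that $p$ is prime) and simultaneously establishes the slightly stronger fact that \emph{every} coefficient $a_1,\dots,a_r$ of the minimal integer polynomial is divisible by $p$. Both are complete proofs of the proposition.
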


\begin{proof}
The proof is similar to that of  Eisenstein's irreducibility criterion. 

Suppose $t_0$ is a root of $P(t)$. Let $R(t)$ be the minimal polynomial of $t_0$ over $\BZ$. Write $R(t) = a_r t^r + \cdots + a_1 t + a_0 \in \BZ[t]$. We claim that $p \mid a_r$. 

Note that $R \mid P$. If $\deg R = \deg P$, then $P= d R$ for some $d \in \BZ$. Since $p \nmid c_0= da_0$ we have $p \nmid d$. Then $p \mid c_q =  da_r$ implies that $p \mid a_r$. 

Suppose $\deg R < \deg P$ then $P(t) = R(t) S(t)$ for some non-constant polynomial $S(t) \in \BZ[t]$. Write $S(t) = b_s t^s + \cdots + b_1 t + b_0$. Note that $p \nmid c_0=a_0 b_0$. Let $i \le r$ be the largest integer such that $p \nmid a_i$, and $j \le s$ the largest integer such that $p \nmid b_j$. Consider 
$$c_{i+j} = \sum_{u+v=i+j} a_u b_{v} = a_i b_j + \sum_{\substack{u+v=i+j, \\ (u,v) \not= (i,j)}} a_u b_{v}.$$

If $u>i$ then $p \mid a_u$. If $u<i$ then $v=i+j-u>j$ and so $p \mid b_v$. This implies that $p \mid a_u b_v$ if $u \not= i$. Hence 
$$
p \mid \sum_{\substack{u+v=i+j, \\ (u,v) \not= (i,j)}} a_u b_{v}.
$$
Since $p \nmid a_ib_j$, we obtain $p \nmid c_{i+j}$. This only occurs when $i+j=0$, namely, $i=j=0$. Hence $p \mid a_u$ for all $1 \le u \le r$. In particular $p \mid a_r$. This implies that $a_r \not= \pm 1$ and therefore $t_0$ cannot be an algebraic ineteger.
\end{proof}

\begin{corollary} \label{algint}
Let $P(t) \in \BZ[t]$ be a non-constant polynomial. Suppose $q$ and $r$ are co-prime integers such that $|q| \ge 2$. Then none of the roots of $Q(t):=q P(t)+r$ are algebraic integers.
\end{corollary}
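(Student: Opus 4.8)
\emph{Proof proposal.} The plan is to reduce Corollary \ref{algint} directly to Proposition \ref{alg}. Write $P(t) = p_m t^m + \cdots + p_1 t + p_0 \in \BZ[t]$ with $m = \deg P \ge 1$. Then
$$Q(t) = qP(t) + r = q p_m\, t^m + \cdots + q p_1\, t + (q p_0 + r)$$
lies in $\BZ[t]$ and is non-constant, since $q \ne 0$ and $m \ge 1$. Denote its coefficients by $c_i$, so that $c_0 = q p_0 + r$ and $c_i = q p_i$ for $1 \le i \le m$.

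Next I would pick a prime divisor $p$ of $q$, which exists because $|q| \ge 2$. For every $i$ with $1 \le i \le m$ we have $p \mid q$, hence $p \mid c_i$. On the other hand $p \mid q p_0$, while $p \nmid r$ — here we use that $\gcd(q,r) = 1$, so no prime dividing $q$ can divide $r$ — and therefore $p \nmid c_0 = q p_0 + r$. Thus $Q(t)$ and the prime $p$ satisfy all the hypotheses of Proposition \ref{alg} (with $Q$ in the role of $P$ and $m$ in the role of $q$ there), so none of the roots of $Q(t)$ are algebraic integers.

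There is essentially no obstacle in this argument; the only point requiring care is the accidental clash of notation, as the letter $q$ denotes an integer coefficient in the corollary but a polynomial degree in Proposition \ref{alg}. Once Proposition \ref{alg} has been established, Corollary \ref{algint} follows immediately.
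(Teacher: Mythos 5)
Your proposal is correct and follows exactly the paper's own argument: choose a prime $p$ dividing $q$, observe that every non-constant coefficient of $Q(t)=qP(t)+r$ is divisible by $p$ while the constant term $qp_0+r$ is not (since $\gcd(q,r)=1$), and invoke Proposition \ref{alg}. No differences worth noting beyond your own remark about the notational clash.
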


\begin{proof}
Let $p$ be any prime factor of $q$. Then all coefficients of $q P(t)$ are divisible by $p$. Since $r$ is co-prime with $q$, it is not divisible by $p$. Hence all coefficients, except the constant coefficient, of $Q(t) = q P(t)+r$ are divisible by $p$.
Then, by Proposition \ref{alg}, none of the roots of $Q(t)$ are algebraic integers.
\end{proof}

\section{Double twist knots} \label{proof:main}

In this section, we first compute the Riley polynomial of double twist knots, then determine the highest and lowest degree terms of the adjoint twisted Alexander polynomial associated with the holonomy representation, and finally give a proof of Theorem \ref{main}.

It is known that $\CK := J(k, 2n)$, where $k \ge 1$ and $n \not= 0$, is fibered if and only if
\begin{itemize}
\item $\CK=J(1,2n)$, 
\item $\CK=J(2, \pm 2)$ (the trefoil knot and figure eight knot, respectively),
\item $\CK=J(3,2n)$ for $n \ge 1$.
\end{itemize}
Moreover, its genus is given by
$$
g(\CK) =
\begin{cases}
1 & \text{if~} k=2m,\\
|n| & \text{if~}  k=2m+1 \ge 3, \\
n-1 & \text{if~}  k=1 \text{~and~} n \ge 1, \\
|n| & \text{if~}  k=1 \text{~and~} n \le -1.
\end{cases}
$$
This can be proved by computing  the Alexander polynomial and applying the fact that an alternating knot is  fibered if and only if its Alexander polynomial is monic, i.e., with leading
coefficient $\pm 1$. Moreover, the degree of the Alexander polynomial of an alternating knot is equal to twice the knot genus. Note that double twist knots  are two-bridge knots which are alternating. See \cite[Lemma 7.3]{MPL} and references therein. 

By \cite{HS}, the knot group is $G(\CK)= \la a, b \mid w^n a = bw^n \ra$ where 
$$
w =  \begin{cases}
(ba^{-1})^m(b^{-1}a)^m& \text{if~} k=2m,\\
(ba^{-1})^mba(b^{-1}a)^m & \text{if~}  k=2m+1.
\end{cases}
$$

\subsection{The Riley polynomial}

A representation $\rho: G(\CK) \to \mathrm{SL}_2(\BC)$ is called non-abelian if its image $\rho(G(\CK))$ is a non-abelian subgroup of $\mathrm{SL}_2(\BC)$. Taking conjugation if necessary, we can assume that $\rho$ has the form 
\begin{equation} \label{matrix}
\rho(a) = \left[ \begin{array}{cc}
M & 1 \\
0 & M^{-1} \end{array} \right]  \quad \text{and} \quad \rho(b) = \left[ \begin{array}{cc}
M & 0 \\
2-y & M^{-1} \end{array} \right],
\end{equation}
where $(M, y) \in \BC^2$ satisfies $\rho(w^n a) = \rho(bw^n)$. Note that $y = \tr \rho(ab^{-1})$. By \cite{Ri1}, the matrix equation $\rho(w^n a) = \rho(bw^n)$ is equivalent to a single polynomial equation $R_\CK(x,y)=0$, where $x = M+M^{-1}$ and $R_K(x, y):=\rho(w)_{11} + (M^{-1} - M) \rho(w)_{12} \in \BC[x, y]$ is the Riley polynomial of a two-bridge knot $K$. Here $\rho(w)_{ij}$ is the $ij$-th entry of the matrix $\rho(w)$. 

To determine the Riley polynomial of $\CK = J(k,2n)$, we first compute $\rho(w)$ via Chebyshev polynomials.  Let $S_l(v), \, l \in \BZ,$ be the Chebyshev polynomials of the second kind defined by 
$S_0(v)=1$, $S_1(v)=v$ and $S_{l}(v) = v S_{l-1}(v) - S_{l-2}(v)$ 
for all integers $l$. 

The following properties of $S_l(v)$'s are well-known, see e.g. \cite{MT-pretzel}. 

\begin{lemma} \label{chev}
$(1)$ If $v = a + a^{-1} \not= \pm 2$ then $S_l(v) = (a^{l+1} - a^{-l-1})/(a-a^{-1})$. Moreover, $S_l( \pm 2)=(\pm 1)^l (l+1)$. 

$(2)$ $S_{-l}(v) = - S_{l-2}(v)$. 

$(3)$ For $l \ge 1$, we have $S_l(v) = \prod_{j=1}^l ( v - 2\cos \frac{j\pi}{l+1})$ and $S_l(v) - S_{l-1}(v) = \prod_{j=1}^l ( v - 2\cos \frac{(2j-1)\pi}{2l+1})$. 
In particular, all the roots of $S_l(v)$ and $S_l(v) - S_{l-1}(v)$ are real numbers in the interval $(-2,2)$. 

$(4)$  $S^2_l(v) - v S_l(v) S_{l-1}(v) + S^2_{l-1}(v) = 1$. 
\end{lemma}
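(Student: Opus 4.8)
The plan is to derive all four parts from the single second-order linear recurrence $S_l = v S_{l-1} - S_{l-2}$, whose characteristic equation $\lambda^2 - v\lambda + 1 = 0$ has roots exactly $a$ and $a^{-1}$ when $v = a + a^{-1}$. First, for part $(1)$, I would take $a \neq a^{-1}$ (equivalently $v \neq \pm 2$) and verify directly that the candidate $S_l(v) = (a^{l+1} - a^{-l-1})/(a-a^{-1})$ satisfies the initial conditions $S_0 = 1$, $S_1 = v$ and the recurrence; this pins it down as the unique solution. For the degenerate value $v = \pm 2$ the characteristic equation has the repeated root $\pm 1$, so the general solution is $(\alpha + \beta l)(\pm 1)^l$, and matching $S_0 = 1$, $S_1 = \pm 2$ gives $S_l(\pm 2) = (\pm 1)^l(l+1)$ (equivalently one takes the limit $a \to \pm 1$ by L'H\^opital in the closed form).

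Second, parts $(2)$ and $(4)$ follow quickly once $(1)$ is available. For $(2)$, I would substitute the closed form into $S_{-l}(v)$ and read off $(a^{-l+1} - a^{l-1})/(a-a^{-1}) = -(a^{l-1} - a^{-(l-1)})/(a-a^{-1}) = -S_{l-2}(v)$, then pass to $v = \pm 2$ by noting that $S_{-l}$ and $-S_{l-2}$ are polynomials in $v$ agreeing on an infinite set. For $(4)$, the cleanest route is the transfer matrix $T = \begin{pmatrix} v & -1 \\ 1 & 0 \end{pmatrix}$: a short induction gives $T^l = \begin{pmatrix} S_l & -S_{l-1} \\ S_{l-1} & -S_{l-2} \end{pmatrix}$, and taking determinants ($\det T = 1$) yields the Cassini identity $S_{l-1}^2 - S_l S_{l-2} = 1$; substituting $S_{l-2} = v S_{l-1} - S_l$ from the recurrence converts this into $S_l^2 - v S_l S_{l-1} + S_{l-1}^2 = 1$.

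Third, for part $(3)$ I would first observe that $S_l$ is monic of degree $l$ (an immediate induction from the recurrence), so it suffices to exhibit $l$ distinct roots. Writing a prospective root as $v = a + a^{-1}$ with $a = e^{i\theta}$, the closed form shows $S_l(v) = 0$ iff $a^{2(l+1)} = 1$ with $a \neq \pm 1$, giving $\theta = j\pi/(l+1)$ and hence the $l$ distinct values $v = 2\cos(j\pi/(l+1))$, $j = 1, \ldots, l$, in $(-2,2)$; monicity then forces the product formula. For $S_l - S_{l-1}$, the same computation reduces $S_l(v) = S_{l-1}(v)$ to $a^{2l+1} = -1$ with $a \neq 1$, producing $\theta = (2j-1)\pi/(2l+1)$ and the $l$ distinct roots $2\cos((2j-1)\pi/(2l+1))$, $j = 1, \ldots, l$, in $(-2,2)$; again a monic degree-$l$ polynomial with $l$ known roots gives the factorization.

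The only delicate points — requiring care rather than genuine difficulty — are the endpoints $a = \pm 1$. In part $(3)$ I must confirm that the angles $\theta = 0$ and $\theta = \pi$ (corresponding to $v = \pm 2$) are correctly excluded as extraneous, so that precisely $l$ genuine roots survive with no multiplicity lost; it is exactly the equality of the number of distinct roots with the degree that licenses the passage from ``these are roots'' to the full factorization. Everything else reduces to routine verification of a linear recurrence against its closed form.
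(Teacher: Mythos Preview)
Your proof is correct and self-contained. The paper does not actually prove this lemma: it states the properties and refers the reader to \cite{MT-pretzel} (``The following properties of $S_l(v)$'s are well-known, see e.g.\ \cite{MT-pretzel}''). So there is no approach to compare against; you have supplied what the paper outsources.

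Two minor remarks on part $(3)$. First, in the computation for $S_l - S_{l-1}$ you write ``$a^{2l+1} = -1$ with $a \neq 1$''; since the closed form from part $(1)$ already presupposes $a \neq \pm 1$, the relevant exclusion is $a \neq -1$ (the value $a=1$ never satisfies $a^{2l+1}=-1$ anyway, while $a=-1$ does but corresponds to $v=-2$, where $S_l(-2)-S_{l-1}(-2)=(-1)^l(2l+1)\neq 0$). You do flag the endpoints $a=\pm 1$ as the delicate cases at the end, so this is only a matter of phrasing. Second, the $2l$ admissible values of $a$ pair off as $a, a^{-1}$ to give exactly the $l$ distinct values of $v$; you implicitly use this when listing $j=1,\dots,l$, and it is worth saying so explicitly when you write the argument out in full.
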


We will also use the following result whose proof is based on the Cayley-Hamilton theorem $V^2 = (\tr V) V  - ( \det V)I$ for $2 \times 2$ matrices $V \in M_2(\BC)$, where $I$ denotes the identity $2 \times 2$ matrix. See e.g. \cite{MT-dtk}.

\begin{lemma} \label{expand}
Let $V \in \mathrm{SL}_2(\BC)$ and $v=\tr V$. Then 
$$
V^l = S_l(v) I - S_{l-1}(v) V^{-1} = S_{l-1}(v) V - S_{l-2}(v) I
$$
for any integer $l$. 
\end{lemma}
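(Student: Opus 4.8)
The plan is routine, resting on the Cayley--Hamilton theorem together with an induction. Since $\det V=1$, Cayley--Hamilton gives $V^2=vV-I$, and multiplying this by $V^{-1}$ and rearranging gives $V^{-1}=vI-V$. From the latter relation and the defining recursion $S_l(v)=vS_{l-1}(v)-S_{l-2}(v)$ one sees immediately that $S_l(v)I-S_{l-1}(v)V^{-1}=S_l(v)I-S_{l-1}(v)(vI-V)=S_{l-1}(v)V-S_{l-2}(v)I$, so the two expressions in the statement coincide for every $l$; hence it suffices to prove, say, $V^l=S_{l-1}(v)V-S_{l-2}(v)I$.

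I would first establish this for all integers $l\ge 0$ by induction on $l$, using the conventions $S_{-1}(v)=0$ and $S_{-2}(v)=-1$ forced by part $(2)$ of Lemma~\ref{chev}. For $l=0$ the right-hand side is $S_{-1}(v)V-S_{-2}(v)I=I=V^0$, and for $l=1$ it is $S_0(v)V-S_{-1}(v)I=V=V^1$. Assuming the identity for some $l\ge 1$, multiply it by $V$ and substitute $V^2=vV-I$ to get $V^{l+1}=S_{l-1}(v)V^2-S_{l-2}(v)V=S_{l-1}(v)(vV-I)-S_{l-2}(v)V$; collecting terms and using $vS_{l-1}(v)-S_{l-2}(v)=S_l(v)$ turns this into $S_l(v)V-S_{l-1}(v)I$, which is the identity for $l+1$. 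The computation is purely algebraic, so no hypothesis on the eigenvalues of $V$ (e.g. $v\neq\pm2$) is needed.

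For negative exponents I would apply the case just proved to $V^{-1}$, which again lies in $\mathrm{SL}_2(\BC)$ and has trace $\tr V^{-1}=\tr(vI-V)=v$. This gives $V^{-l}=(V^{-1})^l=S_{l-1}(v)V^{-1}-S_{l-2}(v)I$ for $l\ge 0$; substituting $V^{-1}=vI-V$ and simplifying with the recursion rewrites the right-hand side as $S_l(v)I-S_{l-1}(v)V$. Finally, the identities $S_{-l}(v)=-S_{l-2}(v)$, $S_{-l-1}(v)=-S_{l-1}(v)$ and $S_{-l-2}(v)=-S_l(v)$ (again part $(2)$ of Lemma~\ref{chev}) show that $S_l(v)I-S_{l-1}(v)V$ equals both $S_{-l}(v)I-S_{-l-1}(v)V^{-1}$ and $S_{-l-1}(v)V-S_{-l-2}(v)I$, i.e. the asserted formula at exponent $-l$. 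The only point requiring attention is the bookkeeping of Chebyshev values at small and negative indices; there is no genuine obstacle here, which is presumably why the paper merely cites \cite{MT-dtk}.
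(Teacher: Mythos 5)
Your proof is correct: the Cayley--Hamilton identity $V^2=vV-I$, the induction for $l\ge 0$, and the reduction of negative exponents via $V^{-1}$ (using $\tr V^{-1}=v$ and $S_{-l}(v)=-S_{l-2}(v)$) all check out, including the bookkeeping at small indices. This is precisely the approach the paper indicates --- it states only that the proof ``is based on the Cayley--Hamilton theorem'' and defers the details to \cite{MT-dtk} --- so your argument simply supplies the omitted verification along the same lines.
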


Then the matrix $\rho(w)$ can be computed explicitly as follows.

\begin{proposition} \label{w}
$(1)$ If $k=2m$ then
\begin{eqnarray*}
\rho(w) &=& I   - S_m(y) S_{m-1}(y)  \left[ \begin{array}{cc}
y-2 & M - M^{-1} \\
(M-M^{-1})(2-y) & 2-y \end{array} \right]\\
&& + \, S^2_{m-1}(y)  \left[ \begin{array}{cc}
(y-2)(y-M^2)  & - (M^{-1} y - M - M^{-1}) \\
(y-2)(M^{-1}y - M - M^{-1}) & M^{-2}(2-y) \end{array} \right]. 
\end{eqnarray*}

$(2)$ If $k=2m+1$ then
\begin{eqnarray*}
\rho(w) &=& I - S_m(y) S_{m-1}(y)  \left[ \begin{array}{cc}
2M^2-y & M + M^{-1} \\
(M+M^{-1}) (2-y)& 2M^{-2}-y \end{array} \right] \\
&& + \, S^2_m(y) \left[ \begin{array}{cc}
M^2-1 & M \\
M (2-y) & -y + 1 + M^{-2}  \end{array} \right]  + S^2_{m-1}(y)  \left[ \begin{array}{cc}
-y+1 + M^2 & M^{-1}  \\
M^{-1}(2-y) & M^{-2}-1 \end{array} \right]. 
\end{eqnarray*}
\end{proposition}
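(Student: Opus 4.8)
The plan is to collapse $\rho(w)$ into a product of a bounded number of explicit $2\times 2$ matrices by repeatedly applying the Cayley--Hamilton expansion of Lemma \ref{expand}. First I would introduce the building blocks $P:=\rho(ba^{-1})$, $Q:=\rho(b^{-1}a)$ and, in the odd case, $R:=\rho(ba)=\rho(b)\rho(a)$, and compute all three matrices entrywise from \eqref{matrix}. A quick check gives $\tr P=\tr Q=y$ and $\det P=\det Q=1$, so Lemma \ref{expand} applies to $P$ and $Q$ with Chebyshev parameter $y$; likewise $\det R=1$ and $\tr R=x^2-y$.

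Since $w=(ba^{-1})^m(b^{-1}a)^m$ when $k=2m$ and $w=(ba^{-1})^m(ba)(b^{-1}a)^m$ when $k=2m+1$, Lemma \ref{expand} gives $P^m=S_{m-1}(y)P-S_{m-2}(y)I$ and $Q^m=S_{m-1}(y)Q-S_{m-2}(y)I$, and multiplying out yields
\[
\rho(w)=S_{m-1}^2(y)\,PQ-S_{m-1}(y)S_{m-2}(y)\,(P+Q)+S_{m-2}^2(y)\,I
\]
for $k=2m$, and
\[
\rho(w)=S_{m-1}^2(y)\,PRQ-S_{m-1}(y)S_{m-2}(y)\,(PR+RQ)+S_{m-2}^2(y)\,R
\]
for $k=2m+1$. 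Thus only the matrices $PQ$, $P+Q$ (resp. $PRQ$, $PR+RQ$, $R$) remain to be written out, and each is an explicit product of the building blocks from the first step.

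To reach the stated normal form I would eliminate $S_{m-2}$ through the recurrence $S_{m-2}(y)=yS_{m-1}(y)-S_m(y)$ and then use the quadratic identity $S_m^2-yS_mS_{m-1}+S_{m-1}^2=1$ of Lemma \ref{chev}(4), both to rewrite $S_{m-2}^2$ as $1-yS_mS_{m-1}+(y^2-1)S_{m-1}^2$ and, where convenient, to feed in copies of the $1$ on the right-hand side so the $I$- and $S_m^2$-terms appear with the coefficients demanded by the statement. In the even case this produces $\rho(w)=I-S_mS_{m-1}\big(yI-(P+Q)\big)+S_{m-1}^2\big(PQ-y(P+Q)+(y^2-1)I\big)$, and a short matrix computation identifies $yI-(P+Q)$ and $PQ-y(P+Q)+(y^2-1)I$ with the two displayed matrices. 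The odd case runs identically, except that since $R\neq I$ one additionally splits $R=I+(R-I)$ and multiplies $R-I$ by $1=S_m^2-yS_mS_{m-1}+S_{m-1}^2$ to spread it over the three Chebyshev monomials, which is what produces the third matrix summand there.

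The routine-but-delicate point is this last step: the entrywise products carry many $M^{\pm1}$ and $(y-2)$ factors, and reorganizing them into exactly the combination of $S_m(y)$ and $S_{m-1}(y)$ required by the statement is easy to spoil by a sign or a stray factor of $y$. To guard against this I would verify the claimed identities directly at $m=0$ (where $w$ is the empty word for $k=2m$, so $\rho(w)=I$, and $w=ba$ for $k=2m+1$, so $\rho(w)=R$) and at $m=1$; these special cases pin down every matrix coefficient.
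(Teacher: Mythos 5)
Your proposal is correct and follows essentially the same route as the paper: expand the $m$-th powers via Lemma \ref{expand}, normalize with the identity $S_m^2-yS_mS_{m-1}+S_{m-1}^2=1$ of Lemma \ref{chev}(4), and finish by explicit $2\times 2$ matrix computation. The only difference is that you start from the branch $V^m=S_{m-1}(v)V-S_{m-2}(v)I$ and then eliminate $S_{m-2}$, whereas the paper uses $V^m=S_m(v)I-S_{m-1}(v)V^{-1}$ directly (so its building blocks are $P^{-1},Q^{-1}$ rather than your $P,Q$, related by $P^{-1}=yI-P$); this is an equivalent reshuffling, and your $m=0,1$ sanity checks are a sensible safeguard.
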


\begin{proof}
$(1)$ If $k=2m$, then $w = (ba^{-1})^m(b^{-1}a)^m$. Since $\tr \rho(ba^{-1}) = \tr \rho(b^{-1}a) = y$, by Lemma \ref{expand} we have
\begin{eqnarray*}
\rho(w) &=& \left( S_m(y)  I - S_{m-1}(y) \rho(ab^{-1}) \right) \left( S_m(y)  I - S_{m-1}(y) \rho(a^{-1}b) \right) \\
&=& S^2_m(y)  I - S_m(y) S_{m-1}(y) \left( \rho(ab^{-1}) + \rho(a^{-1}b) \right)+ S^2_{m-1}(y)  \rho(ab^{-1}a^{-1}b).
\end{eqnarray*}
By Lemma \ref{chev} we have $S^2_m(y) - y S_m(y) S_{m-1}(y) + S^2_{m-1}(y) = 1$. Hence
$$
\rho(w) = I - S_m(y) S_{m-1}(y) \left( \rho(ab^{-1}) + \rho(a^{-1}b) - y I\right)+ S^2_{m-1}(y)  \left( \rho(ab^{-1}a^{-1}b) - I \right).
$$

$(2)$ If $k=2m+1$ then $w = (ba^{-1})^mba(b^{-1}a)^m$. Hence
\begin{eqnarray*}
\rho(w) &=& \left( S_m(y)  I - S_{m-1}(y) \rho(ab^{-1}) \right) \rho(ba) \left( S_m(y)  I - S_{m-1}(y) \rho(a^{-1}b) \right) \\ 
&=& S^2_m(y) \rho(ba) - S_m(y) S_{m-1}(y) \left( \rho(a^2) +  \rho(b^2) \right) + S^2_{m-1}(y) \rho(ab) \\
&=&  I - S_m(y) S_{m-1}(y) \left( \rho(a^2) +  \rho(b^2) - y I\right) \\
&& + \, S^2_m(y) \left( \rho(ba) - I \right) + S^2_{m-1}(y) \left( \rho(ab) - I \right).
\end{eqnarray*}
The lemma follows by using the matrix forms of $\rho(a)$ and $\rho(b)$ in \eqref{matrix}. 
\end{proof}

Let $z: = \tr \rho(w) = \rho(w)_{11} + \rho(w)_{22}$. By taking the trace of $\rho(w)$ in Proposition \ref{w} and noting that $M^2+M^{-2} = x^2-2$, we obtain
\begin{equation} \label{z}
z =  \begin{cases}
2 + (y-2)(y+2-x^2)S^2_{m-1}(y) & \text{if~} k=2m,\\
2 - (y+2-x^2) \left( S_m(y) - S_{m-1}(y) \right)^2 & \text{if~}  k=2m+1.
\end{cases}
\end{equation}

We can now  compute the Riley polynomial $R_\CK(x,y) = \rho(w^n)_{11} + (M^{-1} - M)\rho(w^n)_{12}$. By Lemma \ref{expand} we have  $\rho(w^n) = S_{n-1}(z) \rho(w) - S_{n-2}(z) I$. Hence we can write $R_\CK = t  S_{n-1}(z) - S_{n-2}(z)$, 
where $t = \rho(w)_{11} + (M^{-1} - M) \rho(w)_{12}$. 

If $k=2m$ then by Proposition \ref{w} we have
\begin{eqnarray*}
t &=& 1  - S_m(y) S_{m-1}(y) \left( y-2 - (M-M^{-1})^2 \right) \\
&& + \, S^2_{m-1}(y)  \left( (y-2)(y-M^2) + (M-M^{-1}) (M^{-1} y - M - M^{-1}) \right) \\
&=& 1 - S_m(y) S_{m-1}(y) (y- M^2-M^{-2}) + S^2_{m-1}(y) (y-1)(y- M^2-M^{-2}).
\end{eqnarray*}
If $k=2m+1$ then it follows from Proposition \ref{w} that
\begin{eqnarray*}
t &=& 1 - S_m(y) S_{m-1}(y)  \left( 
(2M^2-y )- (M - M^{-1}) (M + M^{-1})  \right) \\
&& + \, S^2_{m-1}(y)  \left(
(-y+1 + M^2) - (M - M^{-1})  M^{-1} \right) \\
&=& 1 - S_m(y) S_{m-1}(y) (M^2+M^{-2} -y) + S^2_{m-1}(y) (M^2+M^{-2} -y).
\end{eqnarray*}

Hence $R_\CK(x,y) = t S_{n-1}(z) - S_{n-2}(z)$ where  
\begin{equation} \label{Riley}
t =  \begin{cases}
1-(y+2-x^2)S_{m-1}(y)(S_m(y) - (y-1)S_{m-1}(y)) & \text{if~} k=2m,\\
1+(y+2-x^2)S_{m-1}(y)(S_m(y) - S_{m-1}(y)) & \text{if~}  k=2m+1.
\end{cases}
\end{equation}
See  \cite{MPL, MT-dtk} for other ways to obtain \eqref{z} and \eqref{Riley}. 

\begin{proposition} \label{genus}

For $\CK = J(k, 2n)$, the following holds.

$(1)$ If $k=2m$, then $S_m(y)S_{n-1}(z) \not= 0$ for any non-abelian $\mathrm{SL}_2(\BC)$-representation.

$(2)$ If $k=2m+1 \ge 3$, then $S_m(y)S_{m-1}(y) \not= 0$ for the holonomy representation $\rho_0$. 
\end{proposition}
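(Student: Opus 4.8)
The plan is to prove both parts by contradiction, exploiting the fact that for the holonomy representation the relevant numerical invariants ($x, y, z$) are algebraic integers, while the vanishing of a Chebyshev polynomial forces a root lying in the open interval $(-2,2)$ that satisfies a polynomial equation making it non-integral.

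For part $(1)$, I would argue as follows. First, $S_{n-1}(z) \neq 0$: if $S_{n-1}(z) = 0$ then $R_\CK(x,y) = -S_{n-2}(z)$, and since consecutive Chebyshev polynomials have no common root (by Lemma \ref{chev}(4), $S^2_{n-1} - zS_{n-1}S_{n-2} + S^2_{n-2} = 1$ forces $S_{n-2}(z) = \pm 1 \neq 0$ when $S_{n-1}(z)=0$), this would make $R_\CK(x,y) \neq 0$, contradicting that $(M,y)$ comes from an actual representation. Next, $S_m(y) \neq 0$: suppose $S_m(y) = 0$. By Lemma \ref{chev}(3), $y$ is then a real number in $(-2,2)$, in particular $y \neq \pm 2$ and $y-2 \neq 0$. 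Looking at \eqref{z}, $z = 2 + (y-2)(y+2-x^2)S^2_{m-1}(y)$. I would substitute $S_m(y) = 0$ into the expression for $t$ in \eqref{Riley}: with $S_m(y) = 0$ we get $t = 1 + (y+2-x^2)(y-1)S^2_{m-1}(y)$, and one also computes $z - 2 = (y-2)(y+2-x^2)S^2_{m-1}(y)$. Then $R_\CK = 0$ becomes a polynomial relation among $y$ (real, in $(-2,2)$) and $x$; combined with the fact that $y$ is an algebraic integer (trace of the holonomy representation) but simultaneously a root of a Chebyshev-type polynomial, I would invoke Corollary \ref{algint} or Proposition \ref{alg} — the Chebyshev polynomial $S_m$ has leading coefficient $1$ but the real root being forced to also satisfy $R_\CK = 0$ with integer structure gives the contradiction. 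Actually the cleaner route: since $y \in (-2,2)$ cannot be an algebraic integer unless it is an ordinary integer (algebraic integers in $(-2,2)$ that are... no, that is false — e.g. $2\cos(2\pi/5)$ is an algebraic integer in $(-2,2)$). So I must be more careful and use the specific arithmetic of $R_\CK$.

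Let me restructure: the real leverage is that for the \emph{holonomy} representation, $x, y$ (and hence $z$) are algebraic integers, and moreover the Riley polynomial relation $R_\CK(x,y)=0$ together with the cusp shape / longitude condition pins these down. The intended argument is surely: assume $S_m(y)S_{m-1}(y) = 0$ (part 2) or $S_m(y)=0$ (part 1); plug into \eqref{Riley} and \eqref{z}; this collapses $R_\CK = 0$ into an equation of the form $qP + r = 0$ with $q,r$ coprime integers and $|q|\ge 2$, so by Corollary \ref{algint} the value in question is not an algebraic integer — contradicting that it is a trace of the holonomy representation. So the key computational step is to show that after the substitution, the polynomial equation satisfied by $y$ (or by whichever variable is the algebraic integer) has exactly the Eisenstein-at-$p$ shape of Corollary \ref{algint}. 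I expect the main obstacle to be precisely this: carrying out the substitution $S_m(y) = 0$ (or $S_{m-1}(y) = 0$) into $t$ and into $z$, then into $R_\CK = tS_{n-1}(z) - S_{n-2}(z)$, and extracting a clean divisibility statement — one must track that the constant term survives modulo the relevant prime while all other coefficients vanish, which requires knowing the precise lowest-order behavior of the Chebyshev polynomials $S_{n-1}(z), S_{n-2}(z)$ at $z = 2$ (note $S_l(2) = l+1$ by Lemma \ref{chev}(1)), since the substitution may force $z \equiv 2$ in a suitable sense.

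I would therefore organize the proof into: (i) reduce $S_{n-1}(z)\neq 0$ to Lemma \ref{chev}(4) as above (quick); (ii) for the remaining claims, set the offending Chebyshev value to zero and record the resulting simplified forms of $t$ (from \eqref{Riley}) and of $z$ (from \eqref{z}); (iii) substitute into $R_\CK = tS_{n-1}(z) - S_{n-2}(z) = 0$ and rewrite it, using $S_l(2) = l+1$ and the recursion, as $q\,P(y,x) + r = 0$ where $q \in \{n-1, n, \text{or similar}\}$ or a factor thereof and $r = \pm 1$ (or $r$ coprime to $q$); (iv) conclude via Corollary \ref{algint} that $y$ (equivalently $x$, or the trace $z$) is not an algebraic integer, contradicting discreteness/faithfulness of $\rho_0$ which forces all its traces to be algebraic integers. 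The distinction between parts $(1)$ and $(2)$ — that $(1)$ holds for \emph{all} non-abelian representations whereas $(2)$ needs the holonomy representation — tells me the algebraic-integer input is genuinely used only in part $(2)$ and in the $S_m(y)\neq 0$ half of part $(1)$ is instead purely algebraic (no root of $S_m$ can satisfy $R_\CK = 0$ for representation-theoretic reasons alone), so I would present the $S_{n-1}(z)\neq 0$ and $S_m(y)\neq 0$ arguments in part $(1)$ without invoking algebraic integers, reserving Corollary \ref{algint} for part $(2)$ where $k$ is odd and the holonomy hypothesis is essential.
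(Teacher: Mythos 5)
Your reduction of $S_{n-1}(z)\neq 0$ to Lemma \ref{chev}(4) is exactly the paper's argument. But the rest of the proposal has a genuine gap, and it is concentrated in the mechanism you propose for part $(2)$ (and for the $y$-half of part $(1)$). You correctly sense that setting the offending Chebyshev value to zero should collapse $R_\CK=tS_{n-1}(z)-S_{n-2}(z)$ into something manifestly nonzero, but the tool you reach for --- Corollary \ref{algint} / Proposition \ref{alg}, i.e.\ showing the resulting value is not an algebraic integer --- cannot work here. Carry out the substitution: if $S_{m-1}(y)=0$ then $S_m(y)=\pm 1$ by Lemma \ref{chev}(4), and for the holonomy representation $x^2=4$, so \eqref{z} and \eqref{Riley} give $z=4-y$ and $t=1$, hence $R_\CK=S_{n-1}(z)-S_{n-2}(z)$; if instead $S_m(y)=0$ one gets $t=z-1$ and $R_\CK=S_n(z)-S_{n-1}(z)$. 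These are \emph{monic} integer polynomials in $z$, so all of their roots \emph{are} algebraic integers and no Eisenstein-type non-integrality contradiction is available. (You half-noticed this problem yourself with the $2\cos(2\pi/5)$ example, but then returned to the same plan.) The actual argument is a root-location argument, not an arithmetic one: since $y$ is a root of a Chebyshev polynomial, Lemma \ref{chev}(3) puts $y$ in the real interval $(-2,2)$, so $z=4-y\in(2,6)$; but Lemma \ref{chev}(3) also says every root of $S_l(v)$ and of $S_l(v)-S_{l-1}(v)$ lies in $(-2,2)$, so $R_\CK\neq 0$, a contradiction. The holonomy hypothesis enters only through $x^2=4$ (needed to pin $z$ down to $4-y$); integrality of traces plays no role anywhere in this proposition --- it is used only later, for monicity of the torsion polynomial.

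Two further corrections. First, the quantity to control in part $(1)$ is $S_{m-1}(y)$, not $S_m(y)$ (the statement has a typo; the paper's own proof and its use in Case 1 of Theorem \ref{main} both concern $S_{m-1}(y)$). With that reading the computation is immediate and purely algebraic: $S_{m-1}(y)=0$ forces $z=2$ and $t=1$ directly from \eqref{z} and \eqref{Riley} with no hypothesis on $x$, whence $R_\CK=S_{n-1}(2)-S_{n-2}(2)=n-(n-1)=1\neq 0$ for every non-abelian representation. Your substitution of $S_m(y)=0$ does not collapse $t$ or $z$, and you never complete that branch. Second, your closing remark that ``the algebraic-integer input is genuinely used only in part $(2)$'' is backwards: it is used in neither part; what part $(2)$ genuinely needs is the parabolicity $x=\pm 2$ of the holonomy representation together with the reality of Chebyshev roots.
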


\begin{proof}
$(1)$ Suppose $k=2m$. If $S_{m-1}(y)=0$, then by \eqref{z} and \eqref{Riley}  we have $z=2$ and $t=1$. Hence $R_\CK = t S_{n-1}(z) - S_{n-2}(z) = S_{n-1}(2) - S_{n-2}(2) = n-(n-1) =1 \not= 0$. 

If $S_{n-1}(z)=0$, then $S^2_{n-1}(z) - z S_{n-1}(z) S_{n-2}(z) + S^2_{n-2}(z)=1$ (by Lemma \ref{chev}) implies that $S^2_{n-2}(z) = 1$. Hence $R_\CK = t S_{n-1}(z) - S_{n-2}(z) = - S_{n-2}(z) = \pm 1 \not= 0$.

Since $R_\CK=0$ for any non-abelian $\mathrm{SL}_2(\BC)$-representation, we obtain $S_m(y)S_{n-1}(z) \not= 0$. 

$(2)$ Suppose $k=2m+1$. For the holonomy representation $\rho_0$ we have $x^2=4$. 

If $S_{m-1}(y)=0$, then by Lemma \ref{chev}, $y \in (-2, 2)$. Moreover, $S^2_{m}(y) - y S_{m}(y) S_{m-1}(y) + S^2_{m-1}(y)=1$ implies that $S_m(y)= \pm 1$. By \eqref{z}  and \eqref{Riley}, we have $z = 2- (y-2) = 4-y \in (2, \infty)$ and $t=1$. Hence $R_\CK = t S_{n-1}(z) - S_{n-2}(z)  =S_{n-1}(z)-S_{n-2} (z)$. 

If $n=1$ then $R_\CK=1$. If $n \ge 2$, then since $z \in (2, \infty)$ Lemma \ref{chev}(3) implies that $R_\CK=S_{n-1}(z)-S_{n-2} (z) \not= 0$. Similarly, if $n \le -1$, then since $S_{-k}(z) = - S_{k-2}(z)$ we have $R_\CK=-S_{|n|-1}(z) + S_{|n|} (z)  \not= 0$. 

If $S_m(y)=0$, then by Lemma \ref{chev}, $y \in (-2, 2)$. Moreover, $S^2_{m}(y) - y S_{m}(y) S_{m-1}(y) + S^2_{m-1}(y)=1$ implies that $S_{m-1}(y)= \pm 1$. By \eqref{z} and \eqref{Riley}, we have $z=2- (y-2)= 4-y$ and $t=1-(y-2)=3-y$.  Hence $t = z-1$ and 
$$
R_\CK= (z-1)S_{n-1}(z)-S_{n-2} (z)=S_n(z) - S_{n-1}(z).
$$  
As in the previous case, since $z \in (2, \infty)$ we have $R_\CK \not=0$. This contradicts the fact that $R_\CK=0$ for any non-abelian $\mathrm{SL}_2(\BC)$-representation. Thus, $S_m(y)S_{m-1}(y) \not= 0$ for the holonomy representation $\rho_0$. 
\end{proof}

\subsection{The twisted Alexander polynomial}

Recall that the knot group of $\CK = J(k, 2n)$ is  $G(\CK)= \la a, b \mid w^n a = bw^n \ra$ where 
$$
w =  \begin{cases}
(ba^{-1})^m(b^{-1}a)^m& \text{if~} k=2m,\\
(ba^{-1})^mb(ab^{-1})^m a & \text{if~}  k=2m+1. 
\end{cases}
$$ . Let $r = w^naw^{-n}b^{-1}$. Then the adjoint twisted Alexander polynomial is given by $\Delta^{\mathrm{Ad} \circ \rho}_\CK(t) = \det \Phi \left( \frac{\partial r}{\partial a} \right) \big/\det\Phi(1-b)$. 

For $l \ge 0$ we let $\sigma_l(u) = \sum_{i=0}^l u^i$. We have
\begin{eqnarray} 
\frac{\partial r}{\partial a} &=& \frac{\partial w^n}{\partial a} + w^n + w^n a \frac{\partial w^{-n}}{\partial a} \nonumber\\
&=&  \begin{cases}
w^n \left( 1+ (1-a) w^{-n} \sigma_{n-1}(w) \frac{\partial w}{\partial a}\right)  & \text{if~} n \ge 1,\\
w^{n} \left( 1 - (1-a) \sigma_{|n|-1}(w) \frac{\partial w}{\partial a}\right) & \text{if~}  n \le -1. \label{dr}
\end{cases}
\end{eqnarray}

We now compute $\frac{\partial w}{\partial a}$.   
If $k=2m$ then $w =(ba^{-1})^m(b^{-1}a)^m$ and 
\begin{eqnarray*}
\frac{\partial w}{\partial a} &=& \frac{\partial (ba^{-1})^m}{\partial a} + (ba^{-1})^m \frac{\partial (b^{-1}a)^m}{\partial a} \\
&=& -ba^{-1} \sigma_{m-1}(ba^{-1}) + (ba^{-1})^m b^{-1} \sigma_{m-1}(ab^{-1}).
\end{eqnarray*}
If $k=2m+1 \ge 3$ then $w = (ba^{-1})^mb(ab^{-1})^m a$ and 
\begin{eqnarray*}
\frac{\partial w}{\partial a} &=& \frac{\partial (ba^{-1})^m}{\partial a}  + (ba^{-1})^m b  \frac{\partial (ab^{-1})^m}{\partial a} + (ba^{-1})^mb(ab^{-1})^m \\
&=& -ba^{-1} \sigma_{m-1}(ba^{-1}) + (ba^{-1})^m b \,\sigma_{m-1}(ab^{-1}) + (ba^{-1})^mb(ab^{-1})^m \\
&=& -ba^{-1} \sigma_{m-1}(ba^{-1}) + (ba^{-1})^m b \,\sigma_{m}(ab^{-1}).
\end{eqnarray*}
If $k=1$, then $w=ba$ and $\frac{\partial w}{\partial a} = b$. Hence
\begin{equation} \label{dw}
\frac{\partial w}{\partial a} =  \begin{cases}
-ba^{-1} \sigma_{m-1}(ba^{-1}) + (ba^{-1})^m b^{-1} \sigma_{m-1}(ab^{-1}) & \text{if~} k=2m,\\
-ba^{-1} \sigma_{m-1}(ba^{-1}) + (ba^{-1})^m b \, \sigma_{m}(ab^{-1}) & \text{if~}  k=2m+1 \ge 3, \\
b & \text{if~} k=1.
\end{cases}
\end{equation} 

Recall that the adjoint action $\mathrm{Ad}$ is the conjugation on $\mathrm{sl}_2(\BC)$ by $\mathrm{SL}_2(\BC)$, i.e. $\mathrm{Ad}(V)(g)=VgV^{-1} \in \mathrm{sl}_2(\BC)$ for all $V \in \mathrm{SL}_2(\BC)$ and $g \in \mathrm{sl}_2(\BC)$. We  need the following lemma.  

\begin{lemma} \cite[Lemma 2.1]{NT} \label{adjoint}
Suppose $V = \left[ \begin{array}{cc}
e& f \\
g & h \end{array} \right] \in \mathrm{SL}_2(\BC)$. Then 
$$
\mathrm{Ad}(V)=\left[ \begin{array}{ccc}
e^2 & -2ef & -f^2 \\
-eg & eh + fg & fh \\
-g^2 & 2gh & h^2 \end{array} \right].
$$
Moreover, $\mathrm{Ad}(V) \in \mathrm{SL}_3(\BC)$. 
\end{lemma}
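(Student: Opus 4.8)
The plan is to prove this lemma by a direct matrix computation in a fixed basis of $\mathrm{sl}_2(\BC)$, and then to obtain the last assertion by a short structural argument. First I would fix the ordered basis $(E,H,F)$ of $\mathrm{sl}_2(\BC)$, where
$$
E=\begin{bmatrix} 0 & 1\\ 0 & 0\end{bmatrix},\qquad
H=\begin{bmatrix} 1 & 0\\ 0 & -1\end{bmatrix},\qquad
F=\begin{bmatrix} 0 & 0\\ 1 & 0\end{bmatrix}.
$$
Since $\det V=eh-fg=1$, we have $V^{-1}=\begin{bmatrix} h & -f\\ -g & e\end{bmatrix}$, and I would then compute the three conjugates $VEV^{-1}$, $VHV^{-1}$, $VFV^{-1}$ by straightforward $2\times 2$ multiplication. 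One finds
$$
VEV^{-1}=\begin{bmatrix} -eg & e^2\\ -g^2 & eg\end{bmatrix},\qquad
VHV^{-1}=\begin{bmatrix} eh+fg & -2ef\\ 2gh & -(eh+fg)\end{bmatrix},\qquad
VFV^{-1}=\begin{bmatrix} fh & -f^2\\ h^2 & -fh\end{bmatrix},
$$
so that $\mathrm{Ad}(V)(E)=e^2E-egH-g^2F$, $\mathrm{Ad}(V)(H)=-2efE+(eh+fg)H+2ghF$, and $\mathrm{Ad}(V)(F)=-f^2E+fhH+h^2F$. Reading off the coordinate vectors of these images as the successive columns gives precisely the matrix displayed in the statement.

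It then remains to show $\mathrm{Ad}(V)\in\mathrm{SL}_3(\BC)$. Since $\mathrm{Ad}(VW)=\mathrm{Ad}(V)\mathrm{Ad}(W)$ and $\mathrm{Ad}(V)\mathrm{Ad}(V^{-1})=\mathrm{Ad}(I)=I$, the map $\mathrm{Ad}$ is a group homomorphism $\mathrm{SL}_2(\BC)\to\mathrm{GL}_3(\BC)$; in particular $\mathrm{Ad}(V)$ is invertible. To pin down the determinant I would use that the trace form $\langle X,Y\rangle=\tr(XY)$ on $\mathrm{sl}_2(\BC)$ is symmetric, non-degenerate (its Gram matrix in the basis $(E,H,F)$ has determinant $-2$), and $\mathrm{Ad}$-invariant, since $\tr\left((VXV^{-1})(VYV^{-1})\right)=\tr(XY)$. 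Hence $\mathrm{Ad}(V)$ preserves this form, so $\det\mathrm{Ad}(V)=\pm1$; and since $\mathrm{SL}_2(\BC)$ is connected and $V\mapsto\det\mathrm{Ad}(V)$ is continuous with value $1$ at $V=I$, it is identically $1$. Alternatively, one can simply expand the $3\times3$ determinant of the displayed matrix and check directly that it collapses to $(eh-fg)^3=1$.

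There is no real conceptual difficulty in this lemma; the only point requiring care is the bookkeeping — choosing and ordering the basis of $\mathrm{sl}_2(\BC)$ and tracking the signs in the products $VXV^{-1}$ — so that the three coordinate vectors emerge in exactly the order, and with the signs, of the columns as written. Everything else reduces to routine $2\times 2$ matrix arithmetic together with the standard fact that $\mathrm{SL}_2(\BC)$ is connected.
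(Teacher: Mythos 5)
Your computation is correct: with the ordered basis $(E,H,F)$ of $\mathfrak{sl}_2(\BC)$ the three conjugates $VEV^{-1}$, $VHV^{-1}$, $VFV^{-1}$ come out exactly as you state, and their coordinate vectors are the columns of the displayed matrix (this is also the ordering consistent with how the paper later uses the formula, e.g.\ for upper-triangular $V$ in Proposition 3.5). The paper itself gives no proof of this lemma --- it is quoted from [NT, Lemma 2.1] --- so there is nothing to compare against, but your argument is complete; for the determinant, either the direct expansion giving $(eh-fg)^3=1$ or your invariance-of-the-trace-form-plus-connectedness argument settles $\mathrm{Ad}(V)\in\mathrm{SL}_3(\BC)$.
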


\begin{proposition} \label{ad}
For $V \in \mathrm{SL}_2(\BC)$ and $l \in \BN$ we have 
$$\det \sigma_l \left( \mathrm{Ad}(V) \right) = (l+1) S^2_l(v)$$ where $v = \tr V$.
\end{proposition}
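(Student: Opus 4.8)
The plan is to express the determinant in terms of the eigenvalues of the single matrix $A := \mathrm{Ad}(V) \in \mathrm{SL}_3(\BC)$. Since $\mathrm{Ad}$ is a group homomorphism (Lemma \ref{adjoint}), we have $\sigma_l(\mathrm{Ad}(V)) = \sum_{i=0}^{l}\mathrm{Ad}(V)^{i} = \sum_{i=0}^{l}\mathrm{Ad}(V^{i})$, a polynomial in $A$. I will use the elementary fact that $\det\sigma_l(A) = \sigma_l(\mu_1)\,\sigma_l(\mu_2)\,\sigma_l(\mu_3)$ for any $A \in M_3(\BC)$ with eigenvalues $\mu_1,\mu_2,\mu_3$ listed with multiplicity: over $\BC$ one conjugates $A$ into upper-triangular form with the $\mu_i$ on the diagonal, whereupon $\sigma_l(A)$ is upper-triangular with the $\sigma_l(\mu_i)$ on the diagonal.

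The crux is then to identify the eigenvalues of $\mathrm{Ad}(V)$. Conjugating $V$ replaces $\mathrm{Ad}(V)$ by a conjugate matrix and leaves $v = \tr V$ unchanged, so both sides of the asserted identity are unaffected, and we may assume $V$ is upper-triangular with diagonal entries $a$ and $a^{-1}$, where $a + a^{-1} = v$. Applying Lemma \ref{adjoint} (with $g = 0$, $e = a$, $h = a^{-1}$; the remaining entry of $V$ affects only the strict upper triangle of $\mathrm{Ad}(V)$) shows that $\mathrm{Ad}(V)$ is upper-triangular with diagonal entries $a^{2}$, $1$, $a^{-2}$. Hence $\mathrm{Ad}(V)$ has eigenvalues $a^{2}, 1, a^{-2}$, and by the fact above,
$$\det\sigma_l(\mathrm{Ad}(V)) = \sigma_l(a^{2})\,\sigma_l(1)\,\sigma_l(a^{-2}) = (l+1)\,\sigma_l(a^{2})\,\sigma_l(a^{-2}).$$

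It remains to verify that $\sigma_l(a^{2})\,\sigma_l(a^{-2}) = S_l^2(v)$. If $v \neq \pm 2$, then $a^{2} \neq 1$, so $\sigma_l(a^{2}) = (a^{2(l+1)}-1)/(a^{2}-1)$ and likewise for $a^{-2}$; multiplying these and using $(a^{2(l+1)}-1)(a^{-2(l+1)}-1) = -(a^{l+1}-a^{-l-1})^{2}$ together with $(a^{2}-1)(a^{-2}-1) = -(a-a^{-1})^{2}$ gives $\sigma_l(a^{2})\,\sigma_l(a^{-2}) = \bigl((a^{l+1}-a^{-l-1})/(a-a^{-1})\bigr)^{2} = S_l^2(v)$ by Lemma \ref{chev}(1). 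If $v = \pm 2$, then $a = \pm 1$, so $a^{2} = a^{-2} = 1$ and $\sigma_l(a^{2})\,\sigma_l(a^{-2}) = (l+1)^{2} = S_l^2(\pm 2)$, again by Lemma \ref{chev}(1). This completes the proof. I do not expect a genuine obstacle here; the only point requiring care is pinning down the eigenvalues of $\mathrm{Ad}(V)$, which Lemma \ref{adjoint} makes transparent once $V$ is triangularized, while the degenerate case $\tr V = \pm 2$ is handled by the trivial computation just given.
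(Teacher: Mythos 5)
Your proof is correct and follows essentially the same route as the paper: conjugate $V$ to upper-triangular form, read off the diagonal $a^2,1,a^{-2}$ of $\mathrm{Ad}(V)$ from Lemma \ref{adjoint}, and compute $\det\sigma_l$ as the product of the diagonal entries of the resulting upper-triangular matrix. The only difference is cosmetic (you phrase it via eigenvalues and treat the degenerate case $v=\pm 2$ separately, whereas the paper uses the single identity $S_l(v)=s^{-l}(1+s^2+\cdots+s^{2l})$, valid for all $s\neq 0$).
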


\begin{proof}
Up to conjugation we can assume that  $V= \left[ \begin{array}{cc}
s& c \\
0 & s^{-1} \end{array} \right]$ for some $s \not= 0$. Since $v = \tr V = s + s^{-1}$, we have $S_l(v) = (s^{l+1} - s^{-l-1})/(s-s^{-1}) = s^{-l}  (1 + s^2 + \cdots + s^{2l})$. 

By Lemma \ref{adjoint} we have $\mathrm{Ad}(V) =\left[ \begin{array}{ccc}
s^2 & -2cs & -c^2 \\
0 & 1& cs^{-1}\\
0 & 0 & s^{-2} \end{array} \right]$. Then
$$
\sigma_l (\mathrm{Ad}(V) ) = \sum_{i=0}^l (\mathrm{Ad}(V) )^i = \left[ \begin{array}{ccc}
1 + s^2 + \cdots + s^{2l} &* & * \\
0 & l+1& *\\
0 & 0 & 1 + s^{-2} + \cdots + s^{-2l} \end{array} \right].
$$
Hence
$$
\det \sigma_l (\mathrm{Ad}(V) )  = (l+1) s^{-2l}  (1 + s^2 + \cdots + s^{2l})^2= (l+1)S^2_l(v).
$$
\end{proof}

We are ready to determine the highest and lowest degree terms of the adjoint twisted Alexander polynomial $\Delta^{\mathrm{Ad} \circ \rho_0}_\CK(t)$, which is equal to the adjoint hyperbolic torsion polynomial $\CT^{\mathrm{Ad}}_\CK(t)$ up to multiplication by $\pm t^{i}~(i\in{\BZ})$. Recall from \eqref{matrix} that, up to conjugation, a non-abelian representation $\rho: G(\CK) \to \mathrm{SL}_2(\BC)$ has the form
$$
\rho(a) = \left[ \begin{array}{cc}
M & 1 \\
0 & M^{-1} \end{array} \right]  \quad \text{and} \quad \rho(b) = \left[ \begin{array}{cc}
M & 0 \\
2-y & M^{-1} \end{array} \right].
$$

Let $A=(\mathrm{Ad} \circ \rho) (a)$, $B=(\mathrm{Ad} \circ \rho) (b)$ and $W=(\mathrm{Ad} \circ \rho) (w)$, which are matrices in $\mathrm{SL}_3(\BC)$. Then $\Phi(a) = t A$, $\Phi(b) = t B$ and $\Phi(w) = \begin{cases}
t^0 W & \text{if~} k=2m,\\
t^6 W & \text{if~}  k=2m+1.
\end{cases}$

Since $\rho(b) = \left[ \begin{array}{cc}
M & 0 \\
2-y & M^{-1} \end{array} \right]$, by Lemma \ref{adjoint} we have 
$$
B = \mathrm{Ad}(\rho(b)) = \left[ \begin{array}{ccc}
M^2 & 0 & 0 \\
-(2-y)M & 1 & 0 \\
-(2-y)^2 & 2(2-y)M^{-1} & M^{-2} \end{array} \right].
$$ 
Then $\det\Phi(1-b) = (1-t)(1 - tM^2)(1 - tM^{-2}) = (1-t)(t^2-t(x^2-2)+1)$. Hence 
$$
\Delta^{\mathrm{Ad} \circ \rho}_\CK(t) = \frac{\det \Phi \left( \frac{\partial r}{\partial a} \right) }{(t-1)(t^2-t(x^2-2)+1)}.
$$

For the holonomy representation $\rho_0$, by \cite{DY} the rational function $\Delta^{\mathrm{Ad} \circ \rho_0}_\CK(t)$ becomes a Laurent polynomial in $t$. Its highest and lowest degree terms are given as follows. 

\begin{proposition} \label{pos}
 If $n \ge 1$ then
\begin{eqnarray*}
\Delta^{\mathrm{Ad} \circ \rho_0}_\CK(t) =
\begin{cases}
mn S^2_{m-1}(y) S^2_{n-1}(z) t^0 + \dots + mn S^2_{m-1}(y) S^2_{n-1}(z) t^{-3} & \text{if~} k=2m,\\
m S^2_m(y) t^{6n-3}  + \dots + m S^2_m(y) t^{0}  & \text{if~}  k=2m+1 \ge 3, \\
t^{6n-6}  + \dots + t^{3} & \text{if~} k=1 \text{~and~} n \ge 2.
\end{cases}
\end{eqnarray*}
\end{proposition}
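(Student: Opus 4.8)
The plan is to compute $\Delta^{\mathrm{Ad}\circ\rho_0}_\CK(t)=\det\Phi\!\big(\tfrac{\partial r}{\partial a}\big)\big/\det\Phi(1-b)$ just finely enough to read off its highest and lowest degree terms, treating $k=2m$, $k=2m+1\ge 3$ and $k=1$ separately. The first observation is that for the holonomy representation the meridian is parabolic, so $x^2=4$ and $M^2=M^{-2}=1$; hence $\det\Phi(1-b)=(1-t)(1-tM^2)(1-tM^{-2})=(1-t)^3$, a polynomial whose lowest term is $1$ and whose top term is $-t^3$. It therefore suffices to determine the extreme coefficients of the matrix Laurent polynomial $N(t):=\Phi\!\big(\tfrac{\partial r}{\partial a}\big)$ and then shift all degrees down by $3$.

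Next I would assemble $N(t)$ from \eqref{dr} and \eqref{dw}. Put $A=(\mathrm{Ad}\circ\rho)(a)$, $B=(\mathrm{Ad}\circ\rho)(b)$, $W=(\mathrm{Ad}\circ\rho)(w)$, so $\Phi(a)=tA$, $\Phi(b)=tB$, and $\Phi(w)$ equals $W$ when $k=2m$ and $t^{2}W$ when $k=2m+1$ (the exponent sum of $w$ being $2$). Writing
\[
N(t)=\Phi(w^n)+\big(I-\Phi(w^n)\Phi(a)\Phi(w^n)^{-1}\big)\,\Phi\!\big(\sigma_{n-1}(w)\big)\,\Phi\!\big(\tfrac{\partial w}{\partial a}\big),
\]
expanding into monomials in $t$, and substituting \eqref{dw}, which presents $\Phi(\partial w/\partial a)$ as a $t$-free part $D_0=-BA^{-1}\sigma_{m-1}(BA^{-1})$ together with a single $t^{\pm1}$-part $t^{\pm1}D_{\pm1}$ coming from the isolated letter $b^{\mp1}$ in \eqref{dw}, one finds: for $k=2m$, $N(t)=tX_1+X_0+t^{-1}X_{-1}$ with $X_1=-W^nAW^{-n}\sigma_{n-1}(W)D_0$ and $X_{-1}=\sigma_{n-1}(W)D_{-1}$; for $k=2m+1\ge3$, $N(t)$ involves only nonnegative powers of $t$, with $t^0$-coefficient $D_0$ and top ($t^{2n}$) coefficient $W^n-W^nAW^{-1}D_1$; for $k=1$ (where $w=ba$ and $\partial w/\partial a=b$) the $t^1$ (bottom) coefficient is a unit times $B$ and the $t^{2n}$ (``expected top'') coefficient is $W^n-W^nAW^{-1}B$.

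It then remains to take determinants of these coefficient matrices and to check they do not vanish. Proposition \ref{ad} is the engine: since $\tr\rho(ba^{-1})=\tr\rho(ab^{-1})=y$ and $\tr\rho(w)=z$, it gives $\det\sigma_{m-1}(BA^{-1})=\det\sigma_{m-1}(AB^{-1})=mS^2_{m-1}(y)$ and $\det\sigma_{n-1}(W)=nS^2_{n-1}(z)$, whence $\det X_{\pm1}=mn\,S^2_{m-1}(y)S^2_{n-1}(z)$, $\det D_0=-m\,S^2_{m-1}(y)$, and $\det B=1$; Proposition \ref{genus} shows these are nonzero for the holonomy representation (it forces $S_{m-1}(y)S_{n-1}(z)\ne0$ when $k=2m$ and $S_{m-1}(y)\ne0$ when $k=2m+1\ge3$), so they really are the leading and trailing coefficients of $\det N(t)$. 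The one exception is $k=1$: there the ``expected'' top coefficient vanishes, because $aw^{-1}b=a(ba)^{-1}b=1$ in the free group forces $W^n-W^nAW^{-1}B=W^n\big(I-(\mathrm{Ad}\circ\rho)(aw^{-1}b)\big)=0$, so one must pass to the next ($t^{2n-1}$) coefficient $W^{n-1}B$, which is invertible; this degree drop by $3$ is exactly what makes the span of $\Delta^{\mathrm{Ad}\circ\rho_0}_\CK(t)$ come out to $3\big(2(n-1)-1\big)=6n-9$, matching $g\big(J(1,2n)\big)=n-1$, and it needs $n\ge2$ for the span to be nonnegative. Dividing $\det N(t)$ by $(1-t)^3$ finally produces the three displayed formulas.

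The main obstacle is the $k=2m+1$ top coefficient $W^n-W^nAW^{-1}D_1$, a $3\times3$ matrix whose determinant is not visible at a glance. The key manoeuvre is an algebraic simplification: with $D_1=(BA^{-1})^mB\,\sigma_m(AB^{-1})$ one has $AW^{-1}D_1=\sum_{j=0}^{m}(\mathrm{Ad}\circ\rho)\big(aw^{-1}(ba^{-1})^mb(ab^{-1})^j\big)$, and the free-group identity $(ab^{-1})(ba^{-1})=1$ forces $aw^{-1}(ba^{-1})^mb=(ba^{-1})^m$, so $AW^{-1}D_1=(BA^{-1})^m\sigma_m(AB^{-1})$; a telescoping then gives $I-(BA^{-1})^m\sigma_m(AB^{-1})=I-\sigma_m(BA^{-1})=-BA^{-1}\sigma_{m-1}(BA^{-1})=D_0$, collapsing the top coefficient to $W^nD_0$, the very matrix (up to the invertible factor $W^n$) that governs the bottom coefficient. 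Once this identity is in place the rest is routine bookkeeping with powers of $t$ and repeated use of Proposition \ref{ad} and Lemma \ref{adjoint}.
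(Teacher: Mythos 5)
Your proposal is correct and follows essentially the same route as the paper's proof: expand $\Phi\bigl(\frac{\partial r}{\partial a}\bigr)$ via \eqref{dr} and \eqref{dw}, read off the extreme matrix coefficients in $t$, evaluate their determinants with Proposition \ref{ad}, collapse the top coefficient in the odd case through the free-group identity $aw^{-1}(ba^{-1})^m b=(ba^{-1})^m$ and the telescoping $I-\sigma_m(BA^{-1})=-BA^{-1}\sigma_{m-1}(BA^{-1})$, and observe the cancellation of the expected top term when $k=1$. One remark: your leading coefficient $mS^2_{m-1}(y)$ in the case $k=2m+1\ge 3$ disagrees with the $mS^2_m(y)$ printed in the statement, but it is the value the paper itself uses in the proof of Theorem \ref{main} (where monicity for $m=1$ requires $mS^2_{m-1}(y)=1$), so this is a typo in the proposition rather than an error in your argument.
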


\begin{proof}
Since $n \ge 1$, by \eqref{dr} we have $\frac{\partial r}{\partial a} =  
w^n \left( 1+ (1-a) w^{-n} \sigma_{n-1}(w) \frac{\partial w}{\partial a}\right)$. 

 If $k=2m$, then it follows from \eqref{dw} and Proposition \ref{ad} that
\begin{eqnarray*}
\det \Phi \left( \frac{\partial r}{\partial a} \right) &=& t^3 \det \left( A W^{-n} \sigma_{n-1}(W) BA^{-1} \sigma_{m-1}(BA^{-1})  \right) \\
&& + \dots + t^{-3} \det \left( - W^{-n} \sigma_{n-1}(W) (BA^{-1})^m B^{-1} \sigma_{m-1}(BA^{-1})  \right) \\
&=& t^3 n S^2_{n-1}(z) m S^2_{m-1}(y) + \dots + t^{-3} n S^2_{n-1}(z) m S^2_{m-1}(y). 
\end{eqnarray*}
Hence $
\Delta^{\mathrm{Ad} \circ \rho_0}_\CK(t) = mn S^2_{m-1}(y) S^2_{n-1}(z) t^0 + \dots + mn S^2_{m-1}(y) S^2_{n-1}(z) t^{-3}$.

If $k=2m+1 \ge 3$, then by \eqref{dw} we have
\begin{eqnarray*}
\det \Phi \left( \frac{\partial r}{\partial a} \right) &=& t^{6n} \det \left( I - AW^{-n} W^{n-1} (BA^{-1})^m B \, \sigma_{m}(AB^{-1})  \right) \\
&& + \dots + t^{0} \det \left( - W^{-n} BA^{-1} \sigma_{m-1}(BA^{-1})   \right). 
\end{eqnarray*}
Since 
\begin{eqnarray*}
I - AW^{-n} W^{n-1} (BA^{-1})^m B  \sigma_{m}(AB^{-1}) &=& I - (AB^{-1})^{-m} \sigma_{m}(AB^{-1}) \\
&=& I - \sigma_{m}(BA^{-1}) \\
&=& - BA^{-1} \sigma_{m-1}(BA^{-1}),
\end{eqnarray*}
by Proposition \ref{ad} we obtain $\det \Phi \left( \frac{\partial r}{\partial a} \right) = t^{6n}  m S^2_m(y) + \dots + t^{0} m S^2_{m-1}(y)$. Hence 
$$
\Delta^{\mathrm{Ad} \circ \rho_0}_\CK(t)   = t^{6n-3}  m S^2_m(y) + \dots + t^{0} m S^2_{m-1}(y).
$$

If $k=1$, then $w=ba$ and $\frac{\partial w}{\partial a} = b$. We have
\begin{eqnarray*}
\det \Phi \left( \frac{\partial r}{\partial a} \right) &=& t^{6n} \det \left( I + (I - t A) (t^2BA)^{-n} \sigma_{n-1}(t^2 BA) (tB) \right) \\
&=&  t^{6n} \det \left(  - (tA) (t^2BA)^{-n} (t^2BA)^{n-2} (tB) + \dots +  (t^2BA)^{-n}  (tB) \right) \\
&=& t^{6n-3} + \dots + t^3.
\end{eqnarray*}
Note that $(tA) (t^2BA)^{-n} (t^2BA)^{n-1} (tB) = I$. Hence $\Delta^{\mathrm{Ad} \circ \rho_0}_\CK(t)   = t^{6n-6}  + \dots + t^{3}$. 
\end{proof}

\begin{proposition} \label{neg}
 If $n \le -1$ then
\begin{eqnarray*}
\Delta^{\mathrm{Ad} \circ \rho_0}_\CK(t) =
\begin{cases}
m|n| S^2_{m-1}(y) S^2_{|n|-1}(z) t^0 + \dots + m|n| S^2_{m-1}(y) S^2_{|n|-1}(z) t^{-3} & \text{if~} k=2m,\\
 t^{-3} (m+1) S^2_{m}(y) + \dots + t^{6n}(m+1) S^2_{m}(y) & \text{if~}  k=2m+1.
\end{cases}
\end{eqnarray*}
\end{proposition}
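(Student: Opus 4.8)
The plan is to run the computation of Proposition~\ref{pos} with $n$ replaced by $-|n|$, now using the $n\le-1$ branch of \eqref{dr},
$$\frac{\partial r}{\partial a}=w^n\Bigl(1-(1-a)\,\sigma_{|n|-1}(w)\,\frac{\partial w}{\partial a}\Bigr).$$
First I would substitute $\partial w/\partial a$ from \eqref{dw} and apply $\Phi=(\mathrm{Ad}\circ\rho_0)\otimes\tilde{\mathfrak a}$; writing $A,B,W$ for the images of $a,b,w$ under $\mathrm{Ad}\circ\rho_0$, so that $\Phi(a)=tA$, $\Phi(b)=tB$, $\Phi(b^{-1})=t^{-1}B^{-1}$, $\Phi(ba^{-1})=BA^{-1}$, $\Phi(ab^{-1})=AB^{-1}$, $\det A=\det B=\det W=1$, and $\Phi(w)$ is $W$ (resp. $W$ times a power of $t$) when $k$ is even (resp. odd), one gets $\Phi(\partial r/\partial a)=\Phi(w^n)\,N(t)$ with $N(t)=I-(I-tA)\,\Phi(\sigma_{|n|-1}(w))\,\Phi(\partial w/\partial a)$. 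Hence $\det\Phi(\partial r/\partial a)=\det\Phi(w^n)\cdot\det N(t)$; since $\det\Phi(w^n)$ is a monomial in $t$ it merely shifts degrees, so the whole point is to pin down the top and bottom $t$-powers of $\det N(t)$ and then divide by $\det\Phi(1-b)=(1-t)\bigl(t^2-t(x^2-2)+1\bigr)$, which always has leading coefficient $-1$ and constant coefficient $1$ (and equals $(1-t)(t-1)^2$ at the holonomy, where $x^2=4$). The only tools needed are multiplicativity of $\det$, the telescoping identities $\sigma_l(u)=1+u\,\sigma_{l-1}(u)=\sigma_{l-1}(u)+u^l$, and Proposition~\ref{ad}, which evaluates $\det\sigma_l(\mathrm{Ad}(V))=(l+1)S^2_l(\tr V)$.

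\noindent\textbf{Even case $k=2m$.}
Here $\Phi(\sigma_{|n|-1}(w))=\sigma_{|n|-1}(W)$ is free of $t$, while $\Phi(\partial w/\partial a)=-BA^{-1}\sigma_{m-1}(BA^{-1})+t^{-1}(BA^{-1})^mB^{-1}\sigma_{m-1}(AB^{-1})$ has $t$-degrees $-1,0$ (the $t^{-1}$ produced by the $b^{-1}$ in \eqref{dw}). Thus $N(t)$ lives in $t$-degrees $-1,0,1$ and $\det N(t)$ in degrees $-3,\dots,3$. I would read off the degree-$1$ and degree-$(-1)$ coefficient matrices of $N(t)$, namely $-A\,\sigma_{|n|-1}(W)\,BA^{-1}\sigma_{m-1}(BA^{-1})$ and $-\sigma_{|n|-1}(W)\,(BA^{-1})^mB^{-1}\sigma_{m-1}(AB^{-1})$, and compute their determinants via Proposition~\ref{ad}, using $\tr\rho_0(w)=z$ to get $\det\sigma_{|n|-1}(W)=|n|S^2_{|n|-1}(z)$ and $\tr\rho_0(ba^{-1})=\tr\rho_0(ab^{-1})=y$ to get $\det\sigma_{m-1}(BA^{-1})=\det\sigma_{m-1}(AB^{-1})=mS^2_{m-1}(y)$: both equal $-m|n|S^2_{m-1}(y)S^2_{|n|-1}(z)$, which is nonzero by Proposition~\ref{genus}(1). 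Dividing by $\det\Phi(1-b)$ then gives the asserted Laurent polynomial, supported in degrees $0,\dots,-3$ with both extreme coefficients $m|n|S^2_{m-1}(y)S^2_{|n|-1}(z)$ (the overall sign being immaterial since $\Delta^{\mathrm{Ad}\circ\rho_0}_\CK$ is defined up to $\pm t^i$).

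\noindent\textbf{Odd case $k=2m+1$} (with $k=1$, i.e. $m=0$, $\partial w/\partial a=b$, $S_0=1$, as a degenerate subcase). Now $\Phi(\sigma_{|n|-1}(w))=\sum_{i=0}^{|n|-1}\Phi(w)^i$ spreads over a band of $t$-degrees, while $\Phi(\partial w/\partial a)=-BA^{-1}\sigma_{m-1}(BA^{-1})+t\,(BA^{-1})^mB\,\sigma_m(AB^{-1})$ and $\Phi(1-a)=I-tA$ have $t$-degrees $0,1$; so $N(t)$ occupies $t$-degrees $0,\dots,2|n|$. Its degree-$0$ coefficient is $N(0)=I+BA^{-1}\sigma_{m-1}(BA^{-1})=\sigma_m(BA^{-1})$, and its top coefficient is the product of the degree-$1$, top, and degree-$1$ coefficients of the three factors, $A\cdot W^{|n|-1}\cdot(BA^{-1})^mB\,\sigma_m(AB^{-1})$; by Proposition~\ref{ad} both have determinant $(m+1)S^2_m(y)$, nonzero by Proposition~\ref{genus}(2) (trivially so when $m=0$). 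Reinstating $\det\Phi(w^n)$ and dividing by $\det\Phi(1-b)$ then yields the asserted polynomial, supported in the range $t^{6n},\dots,t^{-3}$ forced by these degree computations, with both extreme coefficients $(m+1)S^2_m(y)$.

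\noindent\textbf{Expected main obstacle.}
The delicate part will be the $t$-degree bookkeeping in the odd case: since $\Phi(w^n)$ and $\Phi(\sigma_{|n|-1}(w))$ now involve many powers of $t$, one must verify that the matrices written down as the extreme coefficients of $N(t)$ really are the only contributions at those degrees and that they are invertible — which is exactly where Proposition~\ref{genus}(2) (for $k\ge3$) enters — so that no hidden cancellation drops the degree or alters the leading coefficient; and one must notice that, in contrast to the analogous end of Proposition~\ref{pos}, \emph{both} ends here collapse to a $\sigma_m$-determinant rather than a $\sigma_{m-1}$-determinant, a consequence of the sign reversal in $n$. The even case is routine once one spots the extra $t^{-1}$ term in $\Phi(\partial w/\partial a)$.
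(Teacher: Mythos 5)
Your proposal is correct and follows essentially the same route as the paper: factor out $\Phi(w^n)$ from the Fox derivative in \eqref{dr}, read off the extreme $t$-degree coefficient matrices of the remaining factor using \eqref{dw}, evaluate their determinants via multiplicativity and Proposition~\ref{ad} (yielding $-m|n|S^2_{m-1}(y)S^2_{|n|-1}(z)$ at both ends in the even case and $\det\sigma_m(BA^{-1})=\det\sigma_m(AB^{-1})=(m+1)S^2_m(y)$ at both ends in the odd case, exactly the telescoping $I+BA^{-1}\sigma_{m-1}(BA^{-1})=\sigma_m(BA^{-1})$ the paper uses), and divide by $\det\Phi(1-b)$. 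Your extra care with the degree bookkeeping and with nonvanishing via Proposition~\ref{genus} only makes explicit what the paper defers to the proof of Theorem~\ref{main}.
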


\begin{proof}
Since $n \le -1$, by \eqref{dr} we have $\frac{\partial r}{\partial a} = w^{n} \left( 1 - (1-a)  \sigma_{|n|-1}(w) \frac{\partial w}{\partial a}\right)$. 

 If $k=2m$, then it follows from \eqref{dw} and Proposition \ref{ad} that
\begin{eqnarray*}
\det \Phi \left( \frac{\partial r}{\partial a} \right) &=& t^3 \det \left( - A \sigma_{|n|-1}(W) BA^{-1} \sigma_{m-1}(BA^{-1})  \right) \\
&& + \dots + t^{-3} \det \left( -  \sigma_{|n|-1}(W) (BA^{-1})^m B^{-1}\sigma_{m-1}(BA^{-1})  \right) \\
&=& t^3 |n| S^2_{|n|-1}(z) m S^2_{m-1}(y) + \dots + t^{-3} |n| S^2_{|n|-1}(z) m S^2_{m-1}(y). 
\end{eqnarray*}
Hence $
\Delta^{\mathrm{Ad} \circ \rho_0}_\CK(t) = m|n| S^2_{m-1}(y) S^2_{|n|-1}(z) t^0 + \dots + m|n| S^2_{m-1}(y) S^2_{|n|-1}(z) t^{-3}$.

If $k=2m+1 \ge 3$, then by \eqref{dw} and Proposition \ref{ad} we have
\begin{eqnarray*}
\det \Phi \left( \frac{\partial r}{\partial a} \right) &=& t^{6n} \det \left( t^{2|n|} A W^{|n|-1} (BA^{-1})^m B \, \sigma_{m}(AB^{-1})  \right) \\
&& + \dots + t^{6n} \det \left( I +  BA^{-1} \sigma_{m-1}(BA^{-1})   \right) \\
&=& t^0 (m+1) S^2_{m}(y) + \dots + t^{6n}(m+1) S^2_{m}(y). 
\end{eqnarray*}
Note that $ I +  BA^{-1} \sigma_{m-1}(BA^{-1}) = \sigma_{m}(BA^{-1})$. Hence 
$$\Delta^{\mathrm{Ad} \circ \rho_0}_\CK(t)  = t^{-3} (m+1) S^2_{m}(y) + \dots + t^{6n}(m+1) S^2_{m}(y).$$

If $k=1$, then $w=ba$ and $\frac{\partial w}{\partial a} = b$. We have
$$
\det \Phi \left( \frac{\partial r}{\partial a} \right) = t^{6n} \det \left( t^{2|n|} A W^{|n|-1} B  \right)  + \dots + t^{6n} \det  I 
= t^0  + \dots + t^{6n}. 
$$
Hence 
$\Delta^{\mathrm{Ad} \circ \rho_0}_\CK(t) = t^{-3}  + \dots + t^{6n}.$
\end{proof}

\subsection{Proof of Theorem \ref{main}} 

Recall that $\BA$ denotes the set of all algebraic integers. 

For the holonomy representation $\rho_0$ of a hyperbolic knot, we have $\tr \rho_0(\mu) = \pm 2$, where $\mu$ is any meridian. By \cite{Ri2}, the Riley polynomial $R_K(\pm 2, y) \in \BZ[y]$ of a two-bridge knot $K$ is monic. This implies that for the holonomy representation $\rho_0$ of a hyperbolic two-bridge knot, we have $y \in \BA$. 

Consider the double twist knot $\CK = J(k, 2n)$ and its holonomy representation $\rho_0$. Since $x = \pm 2$, by \eqref{z} we have
$$
z =  \begin{cases}
2 + (y-2)^2S^2_{m-1}(y) & \text{if~} k=2m,\\
2 - (y-2) \left( S_m(y) - S_{m-1}(y) \right)^2 & \text{if~}  k=2m+1.
\end{cases}
$$ 

The proof of Theorem \ref{main} is divided into three cases: $(1)$ $k=2m$, $(2)$ $k=2m+1 \ge 3$, and $(3)$ $k=1$. 

\underline{Case 1}: $k=2m$. Note that $J(2m, 2n)$ has genus one. Moreover, $J(2m, 2n)$ is fibered if and only  if $m=|n|=1$. 

Since $S_{-n-1}(z) = - S_{n-1}(z)$, by Proposition \ref{genus} we have $S^2_{m-1}(y) S^2_{|n|-1}(z) \not= 0$. Hence  Propositions \ref{pos} and \ref{neg} imply that the adjoint twisted Alexander polynomial $\Delta^{\mathrm{Ad} \circ \rho_0}_\CK(t)$ is a polynomial of degree $3$ with highest coefficient $m|n| S^2_{m-1}(y) S^2_{|n|-1}(z)$. 

If $m|n|=1$, then $m|n| S^2_{m-1}(y) S^2_{|n|-1}(z)=1$. Hence $\Delta^{\mathrm{Ad} \circ \rho_0}_\CK(t)$ is monic.

If $m|n| \ge 2$, then by applying Corollary \ref{algint} with $q=m|n|$, $r = \pm 1$ and $P(y) = S^2_{m-1}(y) S^2_{|n|-1}(z) \in \BZ[y]$ and noting that $y \in \BA$, we obtain $m|n| S^2_{m-1}(y) S^2_{|n|-1}(z) \pm 1 \not= 0$. This means that $\Delta^{\mathrm{Ad} \circ \rho_0}_\CK(t)$ is not monic. 

\underline{Case 2}: $k=2m+1 \ge 3$. Note that $J(2m+1, 2n)$ has genus $|n|$. Moreover, $J(2m+1, 2n)$ is fibered if and only if $m=1$ and $n \ge 1$. 

By Proposition \ref{genus} we have $S_m(y) \not= 0$ and $S_{m-1}(y) \not= 0$. 

If $n \ge 1$, then by Proposition \ref{pos}, $\Delta^{\mathrm{Ad} \circ \rho_0}_\CK(t)$ is a polynomial of degree $6n-3$ with highest coefficient $mS^2_{m-1}(y)$. If $m=1$, then $mS^2_{m-1}(y)=1$ and hence $\Delta^{\mathrm{Ad} \circ \rho_0}_\CK(t)$ is monic. If $m \ge 2$, then by applying Corollary \ref{algint} with $q=m$, $r = \pm 1$ and $P(y) = S^2_{m-1}(y)  \in \BZ[y]$ and noting that $y \in \BA$, we obtain  $mS^2_{m-1}(y) \pm 1 \not= 0$. Thus $\Delta^{\mathrm{Ad} \circ \rho_0}_\CK(t)$ is not monic. 

If $n \le -1$, then by Proposition \ref{neg}, $\Delta^{\mathrm{Ad} \circ \rho_0}_\CK(t)$ is a polynomial of degree $-3+6|n|$ with highest coefficient $(m+1)S^2_{m}(y)$. Since $y \in \BA$, Corollary \ref{algint} implies that $(m+1)S^2_{m}(y)  \pm 1 \not= 0$. This means that $\Delta^{\mathrm{Ad} \circ \rho_0}_\CK(t)$ is not monic.

\underline{Case 3}: $k=1$. Note that $J(1, 2n)$ is fibered and its genus is given by 
$$
g(J(1, 2n)) =
\begin{cases}
n-1 & \text{if~} n \ge 1,\\
|n| & \text{if~}  n \le -1.
\end{cases}
$$

If $n \ge 2$, then by Proposition \ref{pos}, $\Delta^{\mathrm{Ad} \circ \rho_0}_\CK(t)$ is a monic polynomial of degree $6n-9$. 

If $n \le -1$, then by Proposition \ref{neg}, $\Delta^{\mathrm{Ad} \circ \rho_0}_\CK(t)$ is a monic polynomial of degree $-3+6|n|$. 

In all  three cases shown, we have $\deg \Delta^{\mathrm{Ad} \circ \rho_0}_\CK (t) = 3 (2 g(\CK)-1)$. Moreover, $\Delta^{\mathrm{Ad} \circ \rho_0}_\CK(t)$ is monic if and only if $\CK$ is fibered. This completes the proof of Theorem \ref{main}.

\section*{Acknowledgements}

The author has been supported by a grant from the Simons Foundation (\#708778). 

\end{document}